\documentclass[11pt]{article}

\usepackage[T1]{fontenc}
\usepackage[utf8]{inputenc}
\usepackage{lmodern}
\usepackage{microtype}
\usepackage{amsmath,amssymb,amsthm}
\usepackage{geometry}
\usepackage{hyperref}
\usepackage{aliascnt}
\usepackage[nameinlink,capitalise,noabbrev]{cleveref}

\hypersetup{hidelinks}
\numberwithin{equation}{section}

\newtheorem{theorem}{Theorem}[section]
\newaliascnt{proposition}{theorem}
\newtheorem{proposition}[proposition]{Proposition}
\aliascntresetthe{proposition}
\newaliascnt{lemma}{theorem}
\newtheorem{lemma}[lemma]{Lemma}
\aliascntresetthe{lemma}
\newaliascnt{corollary}{theorem}
\newtheorem{corollary}[corollary]{Corollary}
\aliascntresetthe{corollary}
\theoremstyle{remark}

\theoremstyle{plain}

\newcommand{\Sym}{\operatorname{Sym}}
\newcommand{\Fix}{\operatorname{Fix}}
\newcommand{\trc}{\operatorname{tr}}
\newcommand{\normal}[1]{\langle\!\langle #1\rangle\!\rangle}
\newcommand{\eps}{\varepsilon}

\title{Soficity of One-Relator Groups and Reducible Presentations}
\author{Jinhyuk Son}
\date{October 2026}

\begin{document}
\maketitle

\begin{abstract}
We prove that every one-relator group is sofic, answering a question of Nate Brown. More generally, every group admitting a reducible presentation without proper powers is sofic, with one proper-power relation allowed at the final reducible step. The proof starts from the endpoint-preserving edge replacements of Poulin--Wr\'obel. We retain the free-group word carried by each terminal replacement and use it to define an $F(S)$-valued cocycle repair; in its relative form, the repair changes only the coordinate of the generator added at that step and leaves all previously chosen coordinates unchanged. Coinduction transports the lower cocycle to the next one-relator-product stage, so this repair iterates along reducible presentations without increasing the defects of earlier relators. For a final relator $w^m$, cyclic translates of the repaired set amplify a zero-defect set of measure close to $1/m$ to one of measure close to one. A cocycle criterion then converts arbitrarily small relator defect into finite permutation approximations by realizing the relevant fixed-point statistics in a treeable skew-product orbit relation.
\end{abstract}

\medskip
\noindent\textbf{2020 Mathematics Subject Classification.} Primary 20F69; Secondary 20F05, 20E06, 37A20.

\section{Introduction}

Sofic groups are groups admitting finite approximation by permutations. The notion goes back to Gromov \cite{Gromov1999} and Weiss \cite{Weiss2000}; basic approximation and closure results are due to Elek--Szab\'o \cite{ElekSzabo2005,ElekSzabo2006}, and measured formulations through actions and equivalence relations were developed by Elek--Lippner \cite{ElekLippner2010} and P\u{a}unescu \cite{Paunescu2011}. Every amenable group and every residually finite group is sofic, but recent constructions of non-sofic groups, including torsion-free examples, show that the class is not universal \cite{OpenAI2026,FournierFacio2026,KunThom2026}.

One-relator groups have been a central testing ground in combinatorial group theory since Magnus's Freiheitssatz \cite{Magnus1930}; see Lyndon--Schupp \cite{LyndonSchupp1977} and the recent survey of Linton--Nyberg-Brodda \cite{LintonNybergBrodda2025}. If the relator is a proper power, the group has torsion and is residually finite by Wise's virtual-specialness theory \cite{Wise2021}, hence sofic. Torsion-free one-relator groups need not be residually finite \cite{Meskin1972}. Some non-residually finite examples were shown to be sofic using HNN decompositions together with permanence over amenable subgroups \cite{CollinsDykema2011,BannonNoblett2010}. Bannon also established hyperlinearity for a related non-residually solvable example \cite{Bannon2011}, and Berlai later obtained further sofic families by a different argument \cite{Berlai2026}. In 2008 Pestov recorded a question, attributed to Nate Brown, asking whether every one-relator group is sofic \cite[Open Question~4.10]{Pestov2008}. The question was subsequently discussed by Bannon--Noblett \cite{BannonNoblett2010} and more recently by Berlai \cite{Berlai2026}. We answer it affirmatively.

\begin{theorem}\label{thm:main}
Every one-relator group is sofic.
\end{theorem}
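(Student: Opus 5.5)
The plan is to deduce \cref{thm:main} from the general result announced in the abstract: every group with a reducible presentation without proper powers is sofic, and one proper-power relator is allowed at the last step. The reduction itself is short. Let $G=\langle S\mid r\rangle$, where $r$ is cyclically reduced, and write $r=w^m$ with $w$ not a proper power and $m\ge1$. If $m\ge2$, Wise's theorem \cite{Wise2021} already gives residual finiteness, hence soficity. Even so, I would run the uniform argument so that it does not depend on that citation. The key observation is that a single relator is trivially a reducible presentation of length one. Its unique relator is the final relator, and that final relator may be a proper power. So the main work is proving the general theorem, and I would organise it as follows.

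\textbf{Step 1 (cocycle criterion).} Show that $G=F(S)/\normal{R}$ is sofic under the following condition. For every $\eps>0$ there must be an ergodic p.m.p.\ treeable orbit relation together with an $F(S)$-valued cocycle (equivalently, a free action of $F(S)$ by partial isometries along the relation) such that each relator $r\in R$ has defect $\mu\{x:\ r\cdot x\neq x\}<\eps$. To prove this, realise the action as a skew product over a treeable relation. Treeable relations are sofic in the sense of Elek--Lippner \cite{ElekLippner2010} and P\u{a}unescu \cite{Paunescu2011}. Their finite approximations then transport the fixed-point statistics to permutations, producing $\eps$-almost homomorphisms $F(S)\to\Sym(n)$ that have small relator defect and keep traces of nontrivial elements of $G$ small. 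Here the freeness and the treeability of the base are what control those traces.

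\textbf{Step 2 (base case and relative repair).} Start from the Poulin--Wr\'obel endpoint-preserving edge replacements. For one relator $w$, keep the free-group word attached to each terminal replacement and use it to redefine the cocycle. The redefinition should change only the coordinate of one generator, the one adjoined at that stage, and it should make the defect of $w$ arbitrarily small.

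\textbf{Step 3 (induction).} For a reducible presentation $G_k=G_{k-1}*_{\text{one relator}}$, coinduce the cocycle of $G_{k-1}$ to the one-relator-product stage. Then apply the relative repair, which changes only the new generator's coordinate, so the defects of the earlier relators are untouched.

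\textbf{Step 4 (proper power).} For a final relator $w^m$, the repair gives a set of measure about $1/m$ on which $w$ has zero defect. Take its cyclic translates under $w$. Their union has measure close to $1$, and on it $w^m$ acts trivially. This gives small defect for $w^m$ without forcing $w$ itself to act trivially, which matters because $w$ is a torsion element of order $m$ in $G$.

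I expect the main obstacle to be Step 2. One must make the repaired cocycle kill $w$ almost everywhere while the orbit relation stays treeable and the action stays free enough that nontrivial elements of $G$ keep small fixed-point sets. The natural first attempt is to use the Freiheitssatz/Magnus hierarchy to choose the repaired generator. The approach is to define the new coordinate on the terminal edge of each replacement path as the retained word, and then to check the fixed-point estimate by induction along the hierarchy.
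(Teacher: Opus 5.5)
Your top-level reduction is sound and matches the paper. Take a letter $s$ of the cyclically reduced root $w$ and put $\Lambda=F(S\setminus\{s\})$; then $\langle S\mid w^m\rangle$ is the case $R'=\varnothing$ of \cref{thm:final-power}. The paper proves \cref{thm:main} by a single relative repair over this free $\Lambda$, with no induction and no Magnus hierarchy. You still need the side cases: reduce to finite $S$, handle $r=1$, and treat $w=s^{\pm1}$ as $F*\mathbb{Z}/m\mathbb{Z}$, since edge cutting does not apply there.

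The genuine gap is your Step 1, and it is why Step 2 looks like an obstacle. As stated, the criterion cannot work. A free $F(S)$-action gives every nontrivial relator defect $1$. Without freeness, nothing in your hypotheses bounds the fixed points of words nontrivial in $G$; the trivial action satisfies everything. The missing idea is to work on a free pmp action of $G$ itself, supplied by Poulin--Wr\'obel, with an $F(S)$-valued cocycle satisfying the lifting condition \eqref{eq:lift}. The defect of $r$ is then $\mu\{x:P_r(x)\neq 1_{F(S)}\}$; on a $G$-space, $\mu\{x:r\cdot x\neq x\}$ is always $0$. With this setup, the two things you flag become automatic:
\begin{itemize}
\item \emph{Treeability.} The skew product $\rho(u)(x,y)=(u\cdot x,P_u(x)\cdot y)$ over a free $F(S)$-space $Y$ equals $(P_u(x)\cdot x,P_u(x)\cdot y)$. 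Its orbit relation therefore lies inside that of a free diagonal $F(S)$-action, so it is treeable by Gaboriau's subrelation theorem. No treeability of the base is needed, and none is available.
\item \emph{Fixed points.} If $q(u)\neq 1$, freeness of the $G$-action forces $P_u(x)\neq 1$ for every $x$, so $\rho(u)$ has no fixed points at all.
\end{itemize}

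With that setup, Step 2 is concrete, and any letter $s$ of $w$ can serve as the repaired generator. Keep $p_a$ for $a\in T^{\pm1}$, put $p_s(x)=\operatorname{val}_p(\theta^N(e_x))^{-1}$ on the deleted-edge set $A$, and put $p_s=s$ off $A$. A uniform termination bound $N$ from Poulin--Wr\'obel is what makes this Borel. For the complementary path $\gamma_x=\theta(e_x)$ one has $\theta^N(\gamma_x)=\theta^{N+1}(e_x)=\theta^N(e_x)$, which gives $P_{v^m}=1$ on $A$ (\cref{lem:terminal-repair}).

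Your Step 4 is misstated. On $A$ it is the cocycle of the normalized relator $v^m=\omega s$ that is trivial, never that of $w$: since $\bar w$ has order $m$ and the action is free, $P_w(x)\neq1$ everywhere. The $v$-translates stay in the zero set of $P_{v^m}$ because $v$ commutes with $v^m$ in $F(S)$ and $v^m$ acts trivially, so $P_{v^m}(v\cdot x)=P_v(x)P_{v^m}(x)P_v(x)^{-1}$ as in \eqref{eq:zero-set-invariance}. The translates are pairwise disjoint only because $A$ lies in a transversal for $\langle\bar v\rangle$. That fact, together with $\mu(A)\ge 1/m-\kappa$ and the bound $N$, has to be extracted from the Poulin--Wr\'obel construction, not from the repair.
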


The proof separates an approximation criterion from a group-theoretic repair. Write a nontrivial cyclically reduced relator as $r=w^m$, where $w$ is not a proper power. Poulin--Wr\'obel \cite{PoulinWrobel2026} construct free probability-measure-preserving (pmp) actions in which selected generator edges are recursively replaced by complementary relator paths while preserving their endpoints. Endpoint preservation does not by itself control the free-group word carried by a replacement path. We therefore retain that word in an $F(S)$-valued cocycle lifting the quotient action and choose the coordinate of a selected generator $s$ from the terminal replacement. This forces the relator cocycle to be trivial on the repaired set.

In the iterated setting, the repair is relative: coordinates already chosen for the lower generators are left unchanged. Combined with coinduction, this gives a one-step procedure that transports a lower cocycle through a one-relator-product extension, preserves the lower relator defects, and makes the new relator defect small. For $w^m$, the repaired set has measure close to $1/m$ and its cyclic translates amplify the zero-defect region to measure close to one. The cocycle criterion then realizes the relevant fixed-point statistics in a treeable skew-product orbit relation and converts them into finite permutation models.

The same relative step can be iterated along reducible presentations in the sense of Howie \cite{Howie1982}.

\begin{theorem}\label{thm:reducible}
Every group admitting a reducible presentation without proper powers is sofic.
\end{theorem}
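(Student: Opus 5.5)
We argue by induction along the reducible presentation, maintaining an approximate cocycle rather than an honest action of each intermediate group. Recall that a reducible presentation in the sense of Howie is built from a chain of generating sets $S_0\subset S_1\subset\cdots\subset S_n$. Here $S_0$ presents a free group, and at step $i$ exactly one new generator block is adjoined together with a single relator $r_i$. The relator $r_i$ involves the new generators and, as a word in the one-relator product of the stage-$(i-1)$ group with a free group, is not a proper power. The inductive hypothesis at stage $i-1$ will be as follows. For every $\eps>0$ there is a free pmp action of $F(S_{i-1})$ on a standard probability space $(X,\mu)$, together with a measurable $F(S_{i-1})$-valued cocycle $c$ lifting it, such that for each earlier relator $r_j$, $j<i$, the set where the relator cocycle $c(r_j,\cdot)$ is nontrivial has measure $<\eps$. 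The base case $i=0$ is the trivial cocycle of a free Bernoulli action, with no relators to control. At the end we feed the final cocycle into the cocycle criterion: small defect for all relators of the final presentation yields finite permutation approximations of the group.

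The inductive step proceeds in three moves. First, coinduce the stage-$(i-1)$ action and cocycle to the group freely generated by $S_i$. The new generators act freely and independently on a product space, and the lower cocycle is transported coordinatewise. Because the defects of $r_j$, $j<i$, involve only lower generators, they are unchanged, measure included. Second, apply the Poulin--Wr\'obel endpoint-preserving edge replacement to the new relator $r_i$. This involves a chosen letter $s\in S_i\setminus S_{i-1}$ occurring in $r_i$, which exists and can be isolated since the presentation is reducible. The replacement produces a pmp action in which most $s$-edges are realized by complementary $r_i$-paths, up to a set of measure $<\eps$. Third, perform the relative repair. On the repaired set, redefine the coordinate $c(s,\cdot)$ as the free-group word carried by the terminal replacement path, and leave every other coordinate, in particular all lower-generator coordinates, untouched. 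The cocycle identity then forces $c(r_i,\cdot)$ to be trivial on the repaired set. Since only the $s$-coordinate changed and $s$ does not occur in $r_j$ for $j<i$, the earlier defects are preserved exactly.

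The main obstacle I expect is the compatibility between the replacement and the coinduced lower structure. The Poulin--Wr\'obel construction is carried out for a free group acting on a single space, whereas here $r_i$ contains lower letters whose action is only approximately a group action. We therefore need the replacement to use the lower letters through the coinduced cocycle values and not through group elements. We also need the repaired $s$-coordinate to remain a measurable cocycle, in particular to remain well defined along orbits, even though the lower group is not realized. My first attempt will be to run the replacement on the skew-product orbit relation determined by the lower cocycle, which is treeable. On that relation the lower letters act honestly and the replacement words are genuine elements of $F(S_i)$. A second issue is quantitative: the repaired set must have measure $>1-\eps$, so the recursion depth in the replacement must be taken large depending on $\eps$, while the repair itself introduces no new error in the earlier relators. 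Once the step is established, iterating it $n$ times with a fixed $\eps$ and applying the cocycle criterion proves \cref{thm:reducible}.
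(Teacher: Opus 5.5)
Your proposal has a genuine gap, and it sits in the inductive invariant. You keep a free pmp action of the free group $F(S_{i-1})$ with a cocycle lifting it, explicitly instead of an honest action of $\Gamma_{i-1}=\langle S_{i-1}\mid r_1,\dots,r_{i-1}\rangle$.

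\emph{The invariant cannot hold.} If $F(S_{i-1})$ acts freely, the lifting (equivalently, endpoint) condition $c(u,x)\cdot x=u\cdot x$ forces $c(u,x)=u$. So every nontrivial relator has defect exactly $1$, and the hypothesis is unsatisfiable as soon as one relator is present.

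\emph{Weakening the lifting condition does not rescue the scheme.} Three steps depend on the relators acting trivially on the space:
\begin{itemize}
\item The endpoint-preserving replacement of an $s$-edge by the complementary $r_i$-path, and hence \cref{lem:terminal-repair}, needs $r_i$ to act trivially. That is false on a coinduced free $F(S_i)$-space.
\item The cocycle criterion needs a free action of the final group. If $r$ acts freely, then $\Fix(\rho(r))$ in \cref{lem:trace-realization} is null whatever the cocycle. Freeness of the $\Gamma$-action is also what gives $P_u(x)\ne1$ everywhere for words nontrivial in $\Gamma$.
\item Passing to the skew-product relation does not help. There the lower relators act trivially at most over their zero-defect sets, so no action of $\Gamma_{i-1}$, let alone of the one-relator product $\Gamma_i$, appears. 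Poulin--Wr\'obel's result constructs free pmp actions of a one-relator product of groups; it is not an operation you can apply to a given approximate action.
\end{itemize}

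\emph{The missing idea.} Put all of the approximation into the $F(S)$-valued cocycle and none into the action. At each stage take an honest free pmp action of $\Gamma_{i-1}$ with a lifting cocycle of small relator defect; such actions always exist. The obstacle you anticipate then disappears:
\begin{itemize}
\item By Howie, $\Gamma_{i-1}$ is locally indicable, so the Freiheitssatz gives $\Gamma_{i-1}\le\Gamma_i$.
\item Coinduce from $\Gamma_{i-1}$ to $\Gamma_i$, not to a free group, and transport the lower cocycle along the $\Gamma_{i-1}$-equivariant factor map (\cref{lem:transport}).
\item Take a diagonal product with an independent Poulin--Wr\'obel $\Gamma_i$-action. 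It supplies the deleted-edge set and a uniform termination bound.
\item Repair only the $s$-coordinate (\cref{prop:relative-step}).
\end{itemize}

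\emph{Further points that need care.}
\begin{itemize}
\item The repair trivializes $P_v$ for the expansion $v=\omega s$ of a cyclically reduced free-product normal form, not $P_{r_i}$. In general $r_i=kc^{-1}v^{\pm1}c$, with $k$ a product of $M$ conjugates of lower relators. So $\operatorname{err}(r_i)\le M\xi+\operatorname{err}(v)$ inherits the lower defects, and the per-stage tolerances must be chosen with these constants $M$ in view.
\item If $r_i$ is conjugate into $\langle s\rangle$ in $\Gamma_{i-1}*\langle s\rangle$, it is conjugate to $s^{\pm1}$. Then Poulin--Wr\'obel does not apply, and one instead sets $p_s\equiv1$.
\item The theorem covers arbitrary presentations, so you still need the reduction to finite subpresentations (\cref{lem:finite-subpresentation-test}). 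Your finite chain description skips this.
\end{itemize}
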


For finite presentations the proof proceeds one relation at a time. The lower group is locally indicable at each stage, and the relative repair changes only the coordinate of the new generator. A finite-subpresentation argument gives the arbitrary case. The same method permits a proper-power relation at the final step.

\begin{theorem}\label{thm:final-power}
Let $\Lambda=\langle S'\mid R'\rangle$ be given by a finite reducible presentation without proper powers. Let $s\notin S'$ and let $u\in F(S'\sqcup\{s\})$ have image in $\Lambda*\langle s\rangle$ which is not conjugate into $\Lambda$ and is not a proper power. Then, for every $m\ge1$,
\[
 (\Lambda*\langle s\rangle)/\normal{u^m}
\]
is sofic.
\end{theorem}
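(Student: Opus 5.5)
We will derive \cref{thm:final-power} by combining the machinery of the abstract: a cocycle soficity criterion, the relative Poulin--Wr\'obel repair, and cyclic amplification for proper powers. Set $G=(\Lambda*\langle s\rangle)/\normal{u^m}$ and $S=S'\sqcup\{s\}$.

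\emph{Step 1: the criterion.} We use the following form of the cocycle criterion. It suffices to produce, for every $\eps>0$, three pieces of data:
\begin{itemize}
\item a free pmp action of a group on a standard probability space $(X,\mu)$ generated by $S$;
\item an $F(S)$-valued cocycle $c$ lifting the action;
\item a treeable skew-product orbit relation, in which each relator $r\in R'\cup\{u^m\}$ satisfies $\mu\{x: c(r,x)\ne 1\}<\eps$, and fixed-point statistics of nontrivial elements of $G$ are close to zero.
\end{itemize}
The criterion then converts this data into finite permutation models $S\to\Sym(n)$ with small relator defect and asymptotically free behaviour on finite sets of $G$. The fixed-point statistics part will follow from freeness of the action of $G$ (or of the lower group) being transported through coinduction, so the work is in the defects.

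\emph{Step 2: the lower stage.} Since $\langle S'\mid R'\rangle$ is a finite reducible presentation without proper powers, we iterate the relative step used for \cref{thm:reducible}, one relation at a time. This yields a free pmp $\Lambda$-action together with a cocycle $c'$ on $F(S')$ whose defects on $R'$ are below $\eps/2$. At each stage the lower group is locally indicable, which is what the relative repair requires.

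\emph{Step 3: coinduction and relative repair.} We coinduce the $\Lambda$-data to the one-relator product $\Lambda*\langle s\rangle/\normal{u^m}$. Coinduction preserves the $S'$-coordinates, so the $R'$-defects are unchanged. We then apply the Poulin--Wr\'obel edge replacement for the relator $u$, which is legitimate because $u$ is not conjugate into $\Lambda$ and is not a proper power. We redefine only the $s$-coordinate from the word carried by the terminal replacement. The effect is to make $c(u^m,\cdot)$ trivial on a set $A$.

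\emph{Step 4: amplification, the main obstacle.} When $m=1$ the repaired set already has measure near one. For $m>1$ the repaired set $A$ has measure only near $1/m$, so the repair alone is not enough. The plan is to use the cyclic translates $A, uA,\dots,u^{m-1}A$, which are disjoint up to small error, and to choose the repair on each translate coherently. The cocycle of $u^m$ at $x$ is a product of the $u$-cocycles along the orbit $x,ux,\dots,u^{m-1}x$. If the repair is defined by the terminal word on one translate, with the other translates forced by equivariance, then the $u^m$-cocycle is trivial on the union, which has measure near $1$.

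The difficulty is verifying that this equivariant assignment is well defined: it must not conflict across translates and must not alter any $S'$-coordinate. Well-definedness should come from the near-disjointness of the translates, which in turn uses that $u$ is not a proper power, so that $u$ has order exactly $m$ in $G$. The $S'$-coordinates are left untouched because only the $s$-coordinate is redefined. Once this is checked, all defects are below $\eps$, and Step 1 gives soficity of $G$.
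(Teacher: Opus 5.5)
\textbf{Genuine gap: the amplification in Step 4.} You flag this step as the main obstacle and leave it open. Your proposed mechanism, redefining the $s$-coordinate on $uA,\dots,u^{m-1}A$ ``by equivariance'', is neither needed nor safe. The terminal repair sets $p_s=\operatorname{val}_p(\tau_x)^{-1}$ on $A$. Its verification relies on every $s$-edge based outside $A$ contributing the plain letter $s$. The terminal paths $\tau_x$ can traverse such undeleted edges based in the translates, so changing $p_s$ there can destroy the triviality already obtained on $A$. Near-disjointness does nothing to prevent this.

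The missing observation is that no further repair is required. Let $v^m$ be the relator actually repaired. It acts trivially and commutes with $v$ in $F(S)$, so the cocycle identity gives
\[
 P_{v^m}(v\cdot x)=P_v(x)\,P_{v^m}(x)\,P_v(x)^{-1}.
\]
This is also what you get by cyclically shifting the factors in your own product formula. Hence $\{P_{v^m}=1\}$ is automatically $v$-invariant.

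Disjointness of the translates does not follow from $\bar v$ having order $m$ alone. It comes from the Poulin--Wr\'obel output that $A$ lies in a Borel transversal for the order-$m$ action of $\bar v$. With this, $A,\bar vA,\dots,\bar v^{m-1}A$ are exactly disjoint subsets of the zero set, whose measure is therefore at least $m(1/m-\kappa)=1-m\kappa$.

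\textbf{Further missing steps.}
\begin{itemize}
\item The terminal repair applies to a normalized word $v$ with $v^m=\omega s$, obtained by cyclic conjugation, possibly inversion, and expansion of the $\Lambda$-syllables over $S'$. It does not apply to $u$ itself. In $F(S)$ one has $u^m=k\,c^{-1}(v^m)^{\zeta}c$, where $k\in\normal{R'}$ is a product of $M$ conjugates of lower relators. So the lower defects must be below $\eps/(2M)$, not $\eps/2$, to get $\operatorname{err}_\alpha(u^m)<\eps$.
\item The Poulin--Wr\'obel construction provides its own free $G$-action. It must be combined with the coinduced action through a diagonal product, with deletion read off the second coordinate. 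This product also supplies the freeness of the $G$-action that the criterion needs; freeness of the lower action does not give it.
\item The hypothesis excludes only conjugacy into $\Lambda$. If $u$ is conjugate into $\langle s\rangle$, edge cutting is unavailable. You must observe separately that $u$ is then conjugate to $s^{\pm1}$, so $G\cong\Lambda*\mathbb{Z}/m\mathbb{Z}$, which is sofic by the finite reducible case and closure under free products.
\end{itemize}
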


The paper is organized around the two parts of the proof described above. \Cref{sec:criterion} gives the cocycle criterion converting small relator defect into finite permutation models. \Cref{sec:repair} isolates the terminal cocycle repair, and \cref{sec:pwinput} records the edge-cutting input from Poulin--Wr\'obel in the finite alphabet needed here. \Cref{sec:one-step} combines these ingredients with coinduction into the relative one-step repair. The main one-relator theorem is then proved in \cref{sec:main-proof}, and \cref{sec:reducible-proof,sec:consequences} carry the construction through finite and arbitrary reducible presentations.

\section{Preliminaries}

Let $S$ be finite and let $F(S)$ be the free group on $S$, with identity $1_{F(S)}$. We write words as
\[
 u=a_k\cdots a_1,\qquad a_i\in S^{\pm1},
\]
so that, for a left action, $a_1$ is traversed first. All path and cocycle formulas use this convention. For a subset $B$ of a group, we write $\normal{B}$ for its normal closure in the ambient group when that group is clear.

For a self-map $f$ of a set, write $\Fix(f)$ for its fixed-point set. For $d\ge1$, write $\Sym(d)$ for the symmetric group on $\{1,\ldots,d\}$. For $\sigma,\sigma'\in\Sym(d)$ put
\[
 d_{\mathrm H}(\sigma,\sigma')=\frac1d|\{j:\sigma(j)\ne\sigma'(j)\}|,
 \qquad
 \trc_d(\sigma)=\frac{|\Fix(\sigma)|}{d}.
\]
A group $\Gamma$ is sofic if for every finite $K\subseteq\Gamma$ and every $\delta>0$ there are $d\ge1$ and a map $\phi:K\to\Sym(d)$ with $\phi(1)=1$ such that
\[
 d_{\mathrm H}(\phi(gh),\phi(g)\phi(h))<\delta
\]
whenever $g,h,gh\in K$, and
\[
 d_{\mathrm H}(\phi(g),\phi(h))>1-\delta
 \qquad(g\ne h,\ g,h\in K).
\]
Soficity is local: a group is sofic if and only if each of its finitely generated subgroups is sofic, as follows directly from the finite-set definition. We use the following sufficient finite-word criterion; see \cite{ElekSzabo2005,ElekSzabo2006,Pestov2008}.

If $q:F(S)\twoheadrightarrow\Gamma$ is a quotient map, it is enough to produce, for every finite $W\subseteq F(S)$ and every $\delta>0$, some $d\ge1$ and a homomorphism $\sigma:F(S)\to\Sym(d)$ such that
\[
 q(u)=1\Rightarrow d_{\mathrm H}(\sigma(u),1)<\delta,
 \qquad
 q(u)\ne1\Rightarrow d_{\mathrm H}(\sigma(u),1)>1-\delta
 \quad(u\in W).
\]
Indeed, choose words representing a prescribed finite subset of $\Gamma$ and the products appearing in the finite test, and apply bi-invariance of normalized Hamming distance.

All probability spaces are standard Borel probability spaces. After restricting an essentially free action to the invariant conull Borel set of points with trivial stabilizer, we regard a free pmp action as pointwise free.

\subsection*{One-relator products and reducible presentations}

If $\Lambda$ and $\Delta$ are groups and $w\in\Lambda*\Delta$, write
\[
 \langle\Lambda,\Delta\mid w\rangle=(\Lambda*\Delta)/\normal{w}.
\]
Following Howie, this is a \emph{one-relator product} when $w$ is not conjugate in $\Lambda*\Delta$ into either factor. For a one-relator product of locally indicable groups, the canonical maps of the factors into the quotient are injective \cite{Howie1981}. Moreover, a one-relator product of locally indicable groups is locally indicable when its defining element is not a proper power \cite{Howie1982}. We use these results in the form summarized by Poulin--Wr\'obel \cite[Section~2.4]{PoulinWrobel2026}.

We recall the notion of reducibility used in \cite{Howie1982,JaikinLinton2025,PoulinWrobel2026}. A presentation is \emph{reducible} if every finite subpresentation can be written
\[
 \langle s_1,\ldots,s_k,t\mid w_1,\ldots,w_\ell,u\rangle,
\]
where each $w_i$ is a word in $s_1,\ldots,s_k$ and, with
\[
 \Lambda=\langle s_1,\ldots,s_k\mid w_1,\ldots,w_\ell\rangle,
\]
the element represented by $u$ in $\Lambda*\langle t\rangle$ is not conjugate into $\Lambda$. If $R$ is the defining-relator set and $r\in R$, put
\[
 \Gamma_r=\langle S\mid R\setminus\{r\}\rangle.
\]
The presentation has \emph{no proper powers} when no $r\in R$ represents a proper power in $\Gamma_r$. Reducibility is inherited by subpresentations, and the no-proper-power condition is inherited as well \cite[Section~4]{PoulinWrobel2026}. For a finite reducible presentation, one may remove a generator and one relation so that the remaining presentation is again reducible.

\section{A cocycle criterion for soficity}\label{sec:criterion}

Let $S$ be finite, let $R\subseteq F(S)$, and put
\[
 \Gamma=\langle S\mid R\rangle,\qquad q:F(S)\twoheadrightarrow\Gamma.
\]
Let $\Gamma\curvearrowright(X,\mu)$ be a free pmp action. We pull the action back to $F(S)$ through $q$.

We consider Borel $1$-cocycles for this action with values in the discrete group $F(S)$, that is, Borel maps
\[
 \alpha:F(S)\times X\longrightarrow F(S)
\]
satisfying
\[
 \alpha(uv,x)=\alpha(u,v\cdot x)\alpha(v,x)
 \qquad(u,v\in F(S),\ x\in X).
\]
We impose in addition the lifting condition
\begin{equation}\label{eq:lift}
 q(\alpha(u,x))=q(u)\qquad(u\in F(S),\ x\in X).
\end{equation}
Since the $\Gamma$-action is free, \eqref{eq:lift} is equivalent to
\begin{equation}\label{eq:endpoint}
 \alpha(u,x)\cdot x=u\cdot x.
\end{equation}
For $a\in S^{\pm1}$ put $p_a(x)=\alpha(a,x)$. Conversely, Borel maps $p_a:X\to F(S)$ satisfying
\begin{equation}\label{eq:generator-compatibility}
 p_a(x)\cdot x=a\cdot x,
 \qquad
 p_{a^{-1}}(a\cdot x)=p_a(x)^{-1}
\end{equation}
for all $a\in S$ and $x\in X$ determine such a cocycle uniquely, because $F(S)$ is free. For $u=a_k\cdots a_1$ we write
\[
 P_u(x)=p_{a_k}(a_{k-1}\cdots a_1\cdot x)\cdots p_{a_1}(x)=\alpha(u,x).
\]

For $u\in F(S)$ set
\[
 \operatorname{err}_\alpha(u)=\mu\{x:P_u(x)\ne1_{F(S)}\}.
\]
The choice $p_a\equiv a$ always satisfies \eqref{eq:generator-compatibility}, but for a nontrivial relator $r\in R$ it gives $P_r(x)=r$ everywhere and hence $\operatorname{err}_\alpha(r)=1$. Thus the canonical cocycle retains the word $u$ itself as an element of $F(S)$, even when $u$ is trivial in $\Gamma$. The repair in \cref{sec:repair} instead changes the free-group representative of selected generator moves while preserving their endpoints, so that the $F(S)$-value of a relator loop can become trivial on a large set.
For $u,v\in F(S)$, the cocycle identity and invariance of $\mu$ give
\begin{equation}\label{eq:error-basic}
 \operatorname{err}_\alpha(uv)\le\operatorname{err}_\alpha(u)+\operatorname{err}_\alpha(v),
 \qquad
 \operatorname{err}_\alpha(u^{-1})=\operatorname{err}_\alpha(u).
\end{equation}
If $k\in F(S)$ acts trivially on $X$, then
\begin{equation}\label{eq:error-conj}
 \operatorname{err}_\alpha(gkg^{-1})=\operatorname{err}_\alpha(k)\qquad(g\in F(S)).
\end{equation}
Indeed, $P_{gkg^{-1}}(x)$ is a conjugate of $P_k(g^{-1}\cdot x)$.

We will also use the corresponding pointwise observation. If $k$ acts trivially and $gk=kg$ in $F(S)$, then the cocycle identity gives
\begin{equation}\label{eq:zero-set-invariance}
 P_k(g\cdot x)=P_g(x)P_k(x)P_g(x)^{-1}.
\end{equation}
Thus the set $\{x:P_k(x)=1_{F(S)}\}$ is $g$-invariant.

We use below the Elek--Lippner notion of sofic action, which is formulated for actions of the countably generated free group $F_\infty$. This differs from P\u{a}unescu's notion of sofic action \cite[Section~1.1]{Paunescu2011}, although the corresponding notions of sofic equivalence relation agree \cite[Propositions~4.5--4.6]{Paunescu2011}.

The following lemma realizes the required fixed-point statistics by finite permutation actions.

\begin{lemma}\label{lem:trace-realization}
For every finite $U\subseteq F(S)$ and every $\eta>0$, there are $d\ge1$ and a homomorphism
\[
 \sigma:F(S)\longrightarrow\Sym(d)
\]
such that
\[
 \left|\trc_d(\sigma(u))-\mu\{x:P_u(x)=1_{F(S)}\}\right|<\eta
 \qquad(u\in U).
\]
\end{lemma}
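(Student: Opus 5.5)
We realize the statistics $\mu\{x:P_u(x)=1_{F(S)}\}$ as fixed-point densities of a free-group action on a probability space, and then approximate that action by finite permutations. The space is the skew product $Y=X\times F(S)$ with measure $\mu\times(\text{counting})$, on which $F(S)$ acts by
\[
 u\cdot(x,g)=\bigl(u\cdot x,\ \alpha(u,x)g\bigr).
\]
The cocycle identity makes this a measure-preserving action, though of an infinite measure space. To recover a probability space we pass to the orbit equivalence relation $\mathcal R$ on $X$ generated by the $\Gamma$-action, which is pmp. The point is that $u$ fixes $(x,g)$ exactly when $u\cdot x=x$ and $P_u(x)=1_{F(S)}$, and the second condition implies the first by \eqref{eq:endpoint}. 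So we want $\mathcal R$-type data on $X$ that encode the action of each generator $a$ by its actual ``word'' $p_a(x)$, and not merely by its endpoint.

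\textbf{Step 1: encoding by a graphing.} On $X$ take the graphing whose edge from $x$ labelled $a$ goes to $a\cdot x$, and decorate it with the cocycle values $p_a(x)\in F(S)$. A cleaner route is to build the treeable relation $\mathcal T$ on $X\times F(S)$ generated by the partial maps $\phi_a(x,g)=(a\cdot x,p_a(x)g)$. Since $F(S)$ acts freely on the second coordinate, the $\mathcal T$-class of $(x,g)$ maps equivariantly onto a quotient of the Cayley tree. Restricting to the probability subset $X\times\{1\}$ loses nothing, because $\phi_a$ changes $g$ only through the cocycle.

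Concretely, I would encode the data as a pmp action of $F_\infty$ on $X$. Enumerate $F(S)=\{g_1,g_2,\ldots\}$ and, for each $a\in S$ and $j$, let $\psi_{a,j}$ be the partial map $x\mapsto a\cdot x$ on the Borel set $\{p_a=g_j\}$. Then $P_u(x)=1$ can be read off from the sequence of pieces visited along the $u$-path starting at $x$. By approximating by finitely many pieces we may assume $p_a$ takes finitely many values, up to error $\eta/2$ uniformly for $u\in U$.

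\textbf{Step 2: finite approximation.} The orbit relation of a free pmp action of $\Gamma$ is generated by a pmp action of a free group (the pulled-back $F(S)$ action). Its finitely many refined pieces are sofic in the Elek--Lippner sense, because they sit inside the treeable skew-product relation, and treeable relations are sofic (Elek--Lippner). That sofic approximation gives maps on $\{1,\dots,d\}$ that approximate the finitely many pieces $\psi_{a,j}$, together with the measures of all finite Boolean combinations of their domains along paths of length $\le\max_{u\in U}|u|$. Define $\sigma(a)$ to be the permutation that agrees with the approximation of $a\cdot{}$. Then $\sigma(u)$ fixes $j$ exactly when the corresponding free-group label product along the path is trivial, up to $\eta$ error. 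This product depends only on the pieces visited, so its statistics match $\mu\{P_u=1\}$.

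\textbf{Main obstacle.} The difficulty is the gap between $\sigma(u)$ fixing a point and the label product being trivial. In the finite model a point could be fixed while the labels multiply to a nontrivial element, or the reverse. The treeable skew product is what should handle this: we approximate the action on $X\times F(S)$ truncated to a large ball, and take $d$-point models of the tree-like relation in which the $F(S)$-coordinate is carried along. Fixed points of $\sigma(u)$ then correspond exactly to returns in both coordinates. I would first try to cite the soficity of treeable equivalence relations and apply it to $\mathcal T$ restricted to $X\times\{1\}$ via the induced graphing.
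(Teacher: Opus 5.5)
There is a genuine gap at Step 2, and the remedy sketched under ``Main obstacle'' does not close it.

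\textbf{Step 2 fails.} Your maps $\psi_{a,j}$ live on $X$, and together they generate the orbit relation of the free $\Gamma$-action. That relation is the \emph{image} of your skew-product relation under the projection $X\times F(S)\to X$, not a subrelation of it. So inheritance of treeability to subrelations gives nothing here. Worse, a sofic approximation of this graphing would already give soficity of $\Gamma$ (Elek--Lippner), which is exactly what the lemma is later used to prove. And even granting such an approximation, the permutations $\sigma(a)$ would model $x\mapsto a\cdot x$ on $X$. Then $\sigma(u)$ fixes a point essentially when $u\cdot x=x$, so every relator gets trace near $1$, even when $\mu\{P_r=1\}=0$ (e.g.\ for the canonical cocycle $p_a\equiv a$). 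Thus the sentence ``$\sigma(u)$ fixes $j$ exactly when the label product is trivial'' is false, as your own ``Main obstacle'' paragraph concedes.

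\textbf{The remedy does not work either.} Elek--Lippner soficity concerns pmp actions and relations on probability spaces, and $X\times F(S)$ with counting measure is infinite. Truncating to $X\times B_n$ is not approximately invariant: for $|S|\ge2$, $F(S)$ is nonamenable and a fixed proportion of each ball lies on its boundary sphere. The set $X\times\{1\}$ is not invariant under the maps $\phi_a$, so the induced relation there carries no $F(S)$-action whose word fixed-point densities you could read off.

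\textbf{Missing idea.} Replace the counting-measure coordinate $F(S)$ by a free pmp $F(S)$-action on a probability space $(Y,\nu)$, for instance a Bernoulli shift, and set $\rho(u)(x,y)=(u\cdot x,P_u(x)\cdot y)$. This is an honest pmp $F(S)$-action on the probability space $X\times Y$, and freeness on $Y$ gives exactly $\Fix(\rho(u))=\{P_u=1\}\times Y$. By \eqref{eq:endpoint} one has $\rho(u)(x,y)=(P_u(x)\cdot x,P_u(x)\cdot y)$. Hence the orbit relation of $\rho$ is contained in that of the free diagonal action, and it is treeable by Gaboriau's subrelation theorem. Elek--Lippner then make this relation sofic. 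Extend $\rho$ trivially to $F_\infty$; by orbit-equivalence invariance of soficity of actions, $\rho$ itself is sofic. Its finite models, restricted to the generators in $S$, are finite $F(S)$-sets whose fixed-point densities for $u\in U$ approximate $\mu\{P_u=1\}$. Your finite-valued truncation of the $p_a$ in Step 1 then becomes unnecessary.
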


\begin{proof}
Choose a free atomless pmp action $F(S)\curvearrowright(Y,\nu)$, for instance a Bernoulli action, and define
\begin{equation}
 \rho(u)(x,y)=\bigl(u\cdot x,\,P_u(x)\cdot y\bigr).
\end{equation}
For the canonical choice $p_a\equiv a$ discussed above, this is just the diagonal action; the repair changes these fixed-point statistics for relators. The cocycle identity makes $\rho$ an $F(S)$-action. It is pmp as well: for fixed $u$, partition $X$ into the countably many Borel sets $X_g=\{x:P_u(x)=g\}$; on $X_g\times Y$ the map is $(x,y)\mapsto(u\cdot x,g\cdot y)$. Each such piece preserves measure and the images $u\cdot X_g$ are pairwise disjoint, so summing over $g\in F(S)$ preserves $\mu\times\nu$.

Let $\Delta$ be the diagonal action $\Delta(g)(x,y)=(g\cdot x,g\cdot y)$. Write $E_\rho$ and $E_\Delta$ for the orbit equivalence relations of $\rho$ and $\Delta$, respectively. The action $\Delta$ is free because the action on $Y$ is free, so its usual Cayley graphing is a treeing of $E_\Delta$. The endpoint identity \eqref{eq:endpoint} gives
\[
 \rho(u)(x,y)=\Delta(P_u(x))(x,y),
\]
and hence $E_\rho\subseteq E_\Delta$. By Gaboriau's theorem that measured subequivalence relations of treeable relations are treeable \cite[Th\'eor\`eme~IV.4]{Gaboriau2000}, $E_\rho$ is treeable.

Enumerate $S=\{a_1,\ldots,a_{|S|}\}$, regard these as the first $|S|$ generators of $F_\infty$, and extend $\rho$ to an action $\widetilde\rho$ of $F_\infty$ by letting all remaining generators act trivially. Then $E_{\widetilde\rho}=E_\rho$.

Since $E_\rho$ is treeable, Elek--Lippner's Theorem~4 implies that $E_\rho$ is a sofic equivalence relation. By their definition, there is therefore a sofic $F_\infty$-action $\beta$ whose orbit relation is $E_\rho$. The actions $\beta$ and $\widetilde\rho$ are measured orbit equivalent, so Elek--Lippner's Theorem~1 implies that $\widetilde\rho$ is sofic. Definition~1.3 then gives finite $F_\infty$-models whose rooted labelled neighbourhood distributions converge to those of $\widetilde\rho$ \cite[Definition~1.3 and Theorems~1,~4]{ElekLippner2010}.

For the fixed finite set $U$, write $|u|_S$ for reduced word length in the basis $S$, and take
\[
 \operatorname{rad}(U):=\max\bigl(\{|S|\}\cup\{|u|_S:u\in U\}\cup\{1\}\bigr).
\]
In Elek--Lippner's convention, a rooted radius-$r$ neighbourhood uses only the first $r$ generators of $F_\infty$. Thus an Elek--Lippner rooted $\operatorname{rad}(U)$-neighbourhood displays every generator in $S$ and contains the labelled path of every reduced word $u\in U$. Whether the $u$-path returns to the root is therefore determined by this neighbourhood. Restricting each finite $F_\infty$-model to the first $|S|$ generators gives a finite $F(S)$-set. These models may be chosen so that, simultaneously for all $u\in U$,
\[
 \left|\frac{|\{z\in V:u\cdot z=z\}|}{|V|}-(\mu\times\nu)(\Fix(\rho(u)))\right|<\eta
\]
for some finite $F(S)$-set $V$.

If $P_u(x)=1_{F(S)}$, then \eqref{eq:endpoint} gives $u\cdot x=x$, so $\rho(u)$ fixes every point over $x$. Conversely, if $\rho(u)(x,y)=(x,y)$, then $P_u(x)\cdot y=y$, and freeness of the action on $Y$ forces $P_u(x)=1_{F(S)}$. Thus
\[
 (\mu\times\nu)(\Fix(\rho(u)))=\mu\{x:P_u(x)=1_{F(S)}\}.
\]
Taking $d=|V|$ and writing the finite $F(S)$-action as a homomorphism $\sigma:F(S)\to\Sym(d)$ proves the lemma.
\end{proof}

\begin{proposition}\label{prop:criterion}
Let $S$ and $R$ be finite, put $\Gamma=\langle S\mid R\rangle$, and let $q:F(S)\twoheadrightarrow\Gamma$ be the quotient map. Suppose that for every $\eps>0$ there exist a free pmp action $\Gamma\curvearrowright(X,\mu)$ and a cocycle satisfying \eqref{eq:lift} such that, with $\operatorname{err}_\alpha$ defined from this cocycle as above,
\[
 \operatorname{err}_\alpha(r)<\eps\qquad(r\in R).
\]
Then $\Gamma$ is sofic.
\end{proposition}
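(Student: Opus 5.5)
The plan is to verify the finite-word criterion from the preliminaries directly, using \cref{lem:trace-realization} to produce the homomorphism $\sigma$. The link is that $d_{\mathrm H}(\sigma(u),1)=1-\trc_d(\sigma(u))$. So it suffices to show two things: the quantity $\mu\{x:P_u(x)=1_{F(S)}\}$ is close to $1$ for words $u$ that are trivial in $\Gamma$, and it is equal to $0$ for words that are nontrivial in $\Gamma$. The criterion then transfers this to the finite model.

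Fix a finite set $W\subseteq F(S)$ and $\delta>0$. For each $u\in W$ with $q(u)=1$, fix once and for all an expression
\[
u=\prod_{i=1}^{N_u} g_i r_i^{\pm1} g_i^{-1}
\]
in $F(S)$, with $r_i\in R$ and $g_i\in F(S)$. Put $N=\max(\{N_u\}\cup\{1\})$ over such $u$; this depends only on $W$. Now choose $\eps=\delta/(2N)$ and take the free action and cocycle provided by the hypothesis. Each relator $r_i$ acts trivially on $X$, because the action factors through $\Gamma$. Hence \eqref{eq:error-conj} and \eqref{eq:error-basic} give
\[
\operatorname{err}_\alpha(u)\le\sum_i\operatorname{err}_\alpha(r_i)<N_u\eps\le\delta/2 .
\]
Equivalently, $\mu\{x:P_u(x)=1_{F(S)}\}>1-\delta/2$.

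For $u\in W$ with $q(u)\ne1$, freeness of the $\Gamma$-action gives $u\cdot x\ne x$ for every $x$. By \eqref{eq:endpoint}, $P_u(x)\cdot x=u\cdot x\ne x$, so $P_u(x)\ne1_{F(S)}$ for every $x$, and the measure of the identity set is $0$.

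Next apply \cref{lem:trace-realization} with $U=W$ and $\eta=\delta/2$ to obtain $d$ and a homomorphism $\sigma:F(S)\to\Sym(d)$. Using $d_{\mathrm H}(\sigma(u),1)=1-\trc_d(\sigma(u))$:
\begin{itemize}
\item if $q(u)=1$, then $d_{\mathrm H}(\sigma(u),1)<\delta/2+\delta/2=\delta$;
\item if $q(u)\ne1$, then $d_{\mathrm H}(\sigma(u),1)>1-\delta/2>1-\delta$.
\end{itemize}
This is exactly the finite-word criterion, so $\Gamma$ is sofic.

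I do not expect a genuine obstacle, since the heavy lifting sits in \cref{lem:trace-realization}. The one point needing care is the order of quantifiers: $\eps$ must be chosen after $W$ and $\delta$ are fixed. This is because the number $N_u$ of relator conjugates needed to express the trivial words in $W$ controls how the small relator errors accumulate. The bound for nontrivial words is uniform and does not depend on $\eps$ at all, because freeness forces $P_u$ to be nowhere trivial.
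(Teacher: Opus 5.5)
Your proposal is correct and follows the paper's proof essentially step for step. You bound the defect of each trivial word in $W$ through a fixed expression as a product of relator conjugates, using \eqref{eq:error-basic} and \eqref{eq:error-conj}; you use freeness and \eqref{eq:endpoint} to show that $P_u$ is nowhere trivial for nontrivial words; and you then apply \cref{lem:trace-realization}. Only the constants differ: you take $\delta/(2N)$ and $\eta=\delta/2$, where the paper takes $\delta/(4L)$ and $\delta/4$.
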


\begin{proof}
Fix a finite $W\subseteq F(S)$ and $\delta>0$. For each $u\in W\cap\normal{R}$ choose an integer $\ell(u)\ge0$ and an expression
\[
 u=\prod_{j=1}^{\ell(u)} c_j r_j^{\epsilon_j} c_j^{-1},
 \qquad c_j\in F(S),\quad r_j\in R,\quad \epsilon_j\in\{\pm1\}.
\]
Let
\[
 L=\max\bigl(\{\ell(u):u\in W\cap\normal{R}\}\cup\{1\}\bigr)
\]
and choose the cocycle so that $\operatorname{err}_\alpha(r)<\delta/(4L)$ for all $r\in R$. Equations \eqref{eq:error-basic} and \eqref{eq:error-conj} give
\[
 \operatorname{err}_\alpha(u)<\delta/4\qquad(u\in W\cap\normal{R}).
\]

If $u\in W\setminus\normal{R}$, then $q(u)\ne1$. Freeness of the $\Gamma$-action and \eqref{eq:endpoint} imply $P_u(x)\ne1_{F(S)}$ for every $x\in X$. Apply \cref{lem:trace-realization} to $W$ with error smaller than $\delta/4$. Words in $W\cap\normal{R}$ then have permutation trace greater than $1-\delta/2$, whereas words in $W\setminus\normal{R}$ have trace smaller than $\delta/4$. These estimates are stronger than the finite-word criterion above with tolerance $\delta$. Hence $\Gamma$ is sofic.
\end{proof}

\section{Cocycle repair from terminal replacement}\label{sec:repair}

Let $S=T\sqcup\{s\}$ and suppose that a word $u=\omega s$ acts trivially on a pmp $F(S)$-space $(X,\mu)$. Suppose that, for every $a\in T^{\pm1}$, Borel maps
\[
 p_a:X\longrightarrow F(S)
\]
have already been chosen and satisfy \eqref{eq:generator-compatibility}. For $x\in X$ write
\[
 e_x=(s\cdot x,x)
\]
for the oriented $s$-edge from $s\cdot x$ back to $x$. If $x$ belongs to a Borel set $A$ of deleted-edge basepoints, replacing $e_x$ by the complementary $u$-path gives another path with the same endpoints. We use a replacement operator $\theta$ that makes this substitution on $e_x^{\pm1}$, fixes all other oriented generator edges, respects inversion, and extends to finite paths by concatenation.

Since the rightmost letter $s$ in $u=\omega s$ is traversed first, the $u$-cycle based at $x$ first traverses $e_x^{-1}$ from $x$ to $s\cdot x$ and then follows the $\omega$-path from $s\cdot x$ back to $x$. Equivalently, $\theta(e_x)$ is that complementary $\omega$-path, oriented from $s\cdot x$ to $x$.

For a finite oriented generator path $\gamma$ containing no deleted $s$-edge, define its \emph{relative value} $\operatorname{val}_p(\gamma)\in F(S)$ as follows. Traverse $\gamma$ from its initial point. An edge labelled $a\in T^{\pm1}$ and traversed from $z$ to $a\cdot z$ contributes $p_a(z)$; an undeleted edge labelled $s^{\pm1}$ contributes the ordinary letter $s^{\pm1}$. The factors are multiplied in the same right-to-left convention as $P_u$. By \eqref{eq:generator-compatibility}, if $\gamma$ runs from $z$ to $z'$, then
\begin{equation}\label{eq:relative-path-endpoint}
 \operatorname{val}_p(\gamma)\cdot z=z'.
\end{equation}
When $p_a\equiv a$ for $a\in T^{\pm1}$, this is just the free-group word carried by the path.

\begin{lemma}\label{lem:terminal-repair}
Assume that $\theta$ is Borel and that for some $N\ge1$,
\[
 \theta^N(e_x)
\]
contains no deleted edge for every $x\in A$. Then the prescribed maps $p_a$, $a\in T^{\pm1}$, extend to Borel maps $p_s,p_{s^{-1}}$ satisfying \eqref{eq:generator-compatibility} such that
\[
 P_u(x)=1_{F(S)}\qquad(x\in A).
\]
The maps attached to $T^{\pm1}$ are unchanged.
\end{lemma}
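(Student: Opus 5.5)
The plan is to define $p_s$ directly from the terminal replacement path. For $x\in A$, put
\[
 p_{s^{-1}}(s\cdot x)=\operatorname{val}_p(\theta^N(e_x)),
\]
the relative value of the oriented path $\theta^N(e_x)$, which runs from $s\cdot x$ to $x$. Then set $p_s(x)=p_{s^{-1}}(s\cdot x)^{-1}$. For $x\notin A$ take $p_s(x)=s$ and $p_{s^{-1}}(s\cdot x)=s^{-1}$. The maps $p_a$ for $a\in T^{\pm1}$ are not touched, which gives the last sentence of the lemma.

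\textbf{Well-definedness.} The value $\operatorname{val}_p$ is defined because, by hypothesis, $\theta^N(e_x)$ contains no deleted edge. The map $x\mapsto\theta^N(e_x)$ is Borel, since $\theta$ is Borel and paths are finite label sequences with Borel dependence on the basepoint. Composing with the Borel maps $p_a$ makes $p_s$ Borel, being a countable piecewise definition.

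\textbf{Endpoint compatibility.} The first condition of \eqref{eq:generator-compatibility} follows from \eqref{eq:relative-path-endpoint}. This uses that $\theta$ preserves endpoints: $\theta^N(e_x)$ runs from $s\cdot x$ to $x$, so $p_{s^{-1}}(s\cdot x)\cdot(s\cdot x)=x$. Inverting gives $p_s(x)\cdot x=s\cdot x$. The inversion condition holds by construction, since $s\mapsto s\cdot x$ is a bijection and each $s$-edge is indexed by a unique $x$.

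\textbf{The key step: $P_u(x)=1_{F(S)}$ on $A$.} Write
\[
 P_u(x)=P_\omega(s\cdot x)\,p_s(x)=\operatorname{val}_{p'}(\omega\text{-path from }s\cdot x)\;\operatorname{val}_p(\theta^N(e_x))^{-1},
\]
where $p'$ denotes the full new system. It then suffices to show that the new total value of any finite path $\gamma$ equals $\operatorname{val}_p(\theta^N(\gamma))$; here "new total value" means the product of the new $p$'s along its edges. Applied to $\gamma=\theta(e_x)$, the $\omega$-path, this gives
\[
 \operatorname{val}_p(\theta^N(\theta(e_x)))=\operatorname{val}_p(\theta^{N+1}(e_x)).
\]
This reduces the key step to a stability statement.

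\textbf{Main obstacle: stabilization of $\theta^N$.} We need $\operatorname{val}_p(\theta^{N+1}(e_x))=\operatorname{val}_p(\theta^N(e_x))$ for all $x\in A$, and more generally that the new value of each edge equals $\operatorname{val}_p$ of its $\theta^N$-image. This is where care is needed. The point is that $\theta$ fixes every non-deleted edge, and $\theta^N(e_x)$ consists only of non-deleted edges. Hence $\theta^{N+1}(e_x)=\theta(\theta^N(e_x))=\theta^N(e_x)$ as paths, and more generally $\theta^M(e_x)=\theta^N(e_x)$ for all $M\ge N$.

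Now check the edge formula by cases, using $M=N$ throughout:
\begin{itemize}
\item A $T$-edge or an undeleted $s$-edge is fixed by $\theta$, and its new value equals its old relative value.
\item A deleted edge $e_y$ with $y\in A$ gets new value $\operatorname{val}_p(\theta^N(e_y))$ by definition.
\item The inverse of a deleted edge is handled by inversion, since $\theta$ respects inversion.
\end{itemize}
Concatenation then extends the formula to all finite paths, because both sides are multiplicative.

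Therefore $P_u(x)=\operatorname{val}_p(\theta^{N+1}(e_x))\operatorname{val}_p(\theta^N(e_x))^{-1}=1_{F(S)}$. One subtlety to verify is that $\operatorname{val}_p$ of the concatenation $\theta^N(\gamma)$ needs no free reduction issues. This is fine, since values are multiplied in $F(S)$ regardless of reduction.
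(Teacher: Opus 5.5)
Your proposal is correct and follows essentially the same route as the paper: you set $p_s(x)=\operatorname{val}_p(\theta^N(e_x))^{-1}$ on $A$ and $p_s(x)=s$ elsewhere. You then note that evaluating with the new maps amounts to replacing each deleted edge $e_y$ by $\theta^N(e_y)$, so the $\omega$-path $\theta(e_x)$ evaluates to $\operatorname{val}_p(\theta^{N+1}(e_x))=\operatorname{val}_p(\theta^N(e_x))$; the paper gets this last equality from $\theta(\tau_x)=\tau_x$, and your general identity for arbitrary paths $\gamma$ is just a slightly more explicit packaging of the same step.
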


\begin{proof}
Put
\[
 \tau_x=\theta^N(e_x)\qquad(x\in A).
\]
The path $\tau_x$ runs from $s\cdot x$ to $x$ and contains no deleted edge. Since $N$ is fixed, $\theta$ is Borel, and the prescribed maps on $T^{\pm1}$ are Borel, the map
\[
 x\longmapsto \operatorname{val}_p(\tau_x)
\]
is Borel. Define
\[
 p_s(x)=
 \begin{cases}
  \operatorname{val}_p(\tau_x)^{-1},&x\in A,\\
  s,&x\notin A,
 \end{cases}
 \qquad
 p_{s^{-1}}(s\cdot x)=p_s(x)^{-1}.
\]
Since $x\mapsto s\cdot x$ is a Borel bijection, the second formula defines $p_{s^{-1}}$ on all of $X$. By \eqref{eq:relative-path-endpoint}, for $x\in A$ the word $\operatorname{val}_p(\tau_x)^{-1}$ carries $x$ to $s\cdot x$. Hence the new maps satisfy \eqref{eq:generator-compatibility}.

Let $\gamma_x=\theta(e_x)$ be the complementary path from $s\cdot x$ to $x$. Evaluate the cocycle product $P_\omega(s\cdot x)$ using the new maps. A prescribed $T$-edge contributes its prescribed value. An undeleted $s$-edge contributes the ordinary letter. If a deleted occurrence is based at $y\in A$, then an occurrence of $e_y$ contributes
\[
 p_s(y)^{-1}=\operatorname{val}_p(\tau_y),
\]
and an occurrence of $e_y^{-1}$ contributes its inverse. Thus replacing each deleted edge in $\gamma_x$ by its terminal path $\tau_y=\theta^N(e_y)$ does not change the evaluated product. Since $\theta$ respects concatenation and $\tau_x$ contains no deleted edge,
\[
 \theta^N(\gamma_x)=\theta^{N+1}(e_x)=\theta(\tau_x)=\tau_x.
\]
In particular $\theta^N(\gamma_x)$ contains no deleted edge, so its relative value is defined, and
\[
 P_\omega(s\cdot x)
 =\operatorname{val}_p\bigl(\theta^N(\gamma_x)\bigr)
 =\operatorname{val}_p(\tau_x)
\]
and hence
\[
 P_u(x)=P_\omega(s\cdot x)p_s(x)=1_{F(S)}.
\]
\end{proof}

\section{Poulin--Wr\'obel edge cutting in a finite alphabet}\label{sec:pwinput}

We use the one-relator product edge-cutting construction of Poulin and Wr\'obel in the following form. Let $\Lambda$ be finitely generated and locally indicable, let $T$ be a finite generating set for $\Lambda$, and put
\[
 \Gamma_m=(\Lambda*\langle s\rangle)/\normal{w^m},\qquad m\ge1,
\]
where $w\in\Lambda*\langle s\rangle$ is not conjugate into either free-product factor and is not a proper power.

After cyclic conjugation and, if necessary, inversion of $w$, choose a cyclically reduced free-product normal-form representative $w_0$ so that, after spelling each $s$-syllable as a string of $s^{\pm1}$, a distinguished $s$-edge in $w_0^m$ is based at the identity and is traversed first with label $s$. For each $\Lambda$-syllable of $w_0$, choose a representing word over $T^{\pm1}$, and let $v\in F(T\sqcup\{s\})$ be the resulting expansion. With this normalization,
\[
 v^m=\omega s
\]
for some $\omega\in F(T\sqcup\{s\})$. Write $\bar v$ for the common image of $v$ and $w_0$ in $\Gamma_m$.

\begin{proposition}\label{prop:pw-terminal}
In the preceding normalization one has $|\langle\bar v\rangle|=m$. For every $\kappa>0$ there exist a free pmp action $\Gamma_m\curvearrowright(X,\mu)$, a Borel set $A\subseteq X$, a Borel replacement operator $\widehat\theta$ for the deleted $s$-edges based in $A$, and an integer $N\ge1$ such that
\[
 \mu(A)\ge \frac1m-\kappa,
 \qquad
 A\text{ is contained in a Borel transversal for }\langle\bar v\rangle,
\]
and $\widehat\theta^N(e_x)$ contains no deleted edge for every $x\in A$.
\end{proposition}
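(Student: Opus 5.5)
The plan is to prove the order statement first, then import the Poulin--Wr\'obel construction, and finally cut down their deleted set to a transversal.

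\emph{Order of $\bar v$.} I would invoke the standard torsion theorem for one-relator products of locally indicable groups (Howie, as summarized in \cite[Section~2.4]{PoulinWrobel2026}): if $w$ is not a proper power in $\Lambda*\langle s\rangle$ and is not conjugate into a factor, then the image of $w$ in $(\Lambda*\langle s\rangle)/\normal{w^m}$ has order exactly $m$. Cyclic conjugation and inversion do not change the order, and $v$ and $w_0$ have the same image. So $|\langle\bar v\rangle|=m$.

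\emph{Importing the edge cutting.} Next I would apply the Poulin--Wr\'obel theorem for one-relator products. It gives, for each $\kappa>0$, a free pmp action of $\Gamma_m$ and a Borel family of deleted $s$-edges, one in each relator cycle on a set of cycles of almost full measure. It also gives a replacement operator that substitutes the complementary relator path, and this operator terminates after a bounded number $N$ of iterations on the deleted set. They phrase this with the relator $w^m$ and its cycles in the Cayley/Schreier graph. The work here is translation. Their graph uses generators of $\Lambda$ and of $\langle s\rangle$, so I would pass to the finite alphabet $T\sqcup\{s\}$ by spelling each $\Lambda$-syllable by the chosen $T$-word and each $s$-syllable by letters $s^{\pm1}$. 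The replacement of a deleted edge is then the $\omega$-path of $v^m=\omega s$, which is exactly the $\widehat\theta$ of \cref{sec:repair}. I must also check that spelling syllables does not create new occurrences of deleted edges inside $\Lambda$-segments. It does not, since deleted edges are $s$-edges and $T$-edges are never deleted. Termination after $N$ steps is then inherited, with $N$ uniform after discarding a set of small measure. Their finiteness statement is pointwise, so I would choose $N$ large enough that the failure set has measure below $\kappa/2$, and remove it from $A$.

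\emph{Measure and transversal.} Each relator cycle of $w_0^m$ based at $z$ visits the points $z,\bar v z,\dots,\bar v^{m-1}z$. These are distinct by freeness and $|\langle\bar v\rangle|=m$, and the cycle is the $m$-fold repetition of the $\bar v$-path. Deleting one distinguished $s$-edge per cycle, at the normalized position, means choosing one basepoint per $\langle\bar v\rangle$-orbit on the cycles selected. So $A$ lies in a Borel transversal, and the transversal can be extended Borel-ly since $\langle\bar v\rangle$ is finite. If the construction deletes on a set of cycles of measure at least $1-m\kappa'$, then $\mu(A)\ge 1/m-\kappa'$. Removing the non-terminating part yields $\mu(A)\ge1/m-\kappa$.

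\emph{Main obstacle.} The main obstacle is ensuring that Poulin--Wr\'obel's deletion is indeed one edge per $\langle\bar v\rangle$-orbit, rather than one per period of $w$, and that their deleted edge can be placed at the normalized first letter $s$. I would handle this by cyclically rotating the choice along the cycle, which is a Borel bijection preserving measure. If their scheme deletes more edges per cycle, I would keep only one representative per orbit; this leaves fewer deleted edges. Termination is not harmed, because undeleting edges only shortens the replacement recursion. The measure estimate is still correct because it counts orbits, not edges.
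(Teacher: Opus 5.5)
Your proposal is correct and follows essentially the paper's argument: import the Poulin--Wr\'obel output (a free action, a deleted set of measure at least $1/m-\kappa$ inside a $\langle\bar v\rangle$-transversal, and termination of the replacement), then expand each $\Lambda$-syllable into its $T$-word. This expansion creates and moves no $s$-edge, so it intertwines $\theta^n$ with $\widehat\theta^n$ and the bound $N$ persists. The differences are minor: you cite Howie's torsion theorem for $|\langle\bar v\rangle|=m$ where the paper cites the proof of Poulin--Wr\'obel's Theorem~3.4, and the paper takes a uniform $N$ directly from their Lemma~3.3 and Claim~3.5, so your fallback of discarding a small non-terminating set is not needed (it is valid, since shrinking the deleted set only prunes the replacement recursion).
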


\begin{proof}
The proof of Poulin--Wr\'obel's one-relator product theorem verifies the hypotheses of their edge-cutting theorem and performs the required cyclic normalization in this setting \cite[proof of Theorem~4.1]{PoulinWrobel2026}. The proof of their Theorem~3.4 also records that the image of the root word has order exactly $m$. Their edge-cutting theorem and its proof then give, for every $\kappa>0$, a free pmp action together with a deleted-edge basepoint set of measure at least $1/m-\kappa$ contained in a transversal for the order-$m$ subgroup $\langle\bar v\rangle$ \cite[Theorem~3.4 and its proof]{PoulinWrobel2026}. We denote this deleted-edge basepoint set by $A$. If $\theta$ denotes their replacement operator in the alphabet $\Lambda\cup\{s\}$, Lemma~3.3 and Claim~3.5 in that proof provide a single integer $N$, independent of $x$, such that $\theta^N(e_x)$ contains no deleted edge for every $x\in A$ \cite[Lemma~3.3 and Claim~3.5]{PoulinWrobel2026}.

Expand every $\Lambda$-syllable occurrence in $w_0$ by its chosen word over $T^{\pm1}$. For a fixed occurrence of $s^{\pm1}$, the product of the preceding letters is unchanged as an element of $\Lambda*\langle s\rangle$; hence the corresponding $s$-edge has the same basepoint before and after expansion. The inserted letters lie in $T^{\pm1}$, so no new $s$-edge is created. It follows that expansion carries the complementary relator path for a deleted edge to the complementary path for the expanded relator and preserves the ordered deleted $s$-edge occurrences, their orientations, and their basepoints. Define $\widehat\theta$ by making the corresponding complementary-path substitution for the relator $v^m$, fixing all surviving generator edges, respecting inversion, and extending by concatenation, as in \cref{sec:repair}. The substitution is finite and fixed, so $\widehat\theta$ is Borel. By induction on $n$, expansion of $\theta^n(e_x)$ is $\widehat\theta^n(e_x)$ for every $n\ge0$. The termination bound $N$ therefore persists after expansion.
\end{proof}

\section{A relative one-step repair}\label{sec:one-step}

The preceding edge cutting can be combined with an arbitrary cocycle on the lower factor. This is the form used in the induction.

The following elementary transport lemma records the part of the construction that preserves the lower cocycle.

\begin{lemma}\label{lem:transport}
Let $T\subset S$, let $\Lambda\le\Gamma$, and suppose the chosen maps $F(T)\twoheadrightarrow\Lambda$ and $F(S)\twoheadrightarrow\Gamma$ are compatible with the inclusion $F(T)\le F(S)$. Let $\Lambda\curvearrowright(Y,\nu)$ be a free pmp action equipped with an $F(T)$-valued cocycle satisfying the lifting condition. Suppose $\Gamma\curvearrowright(Z,\eta)$ is a pmp action and $\pi:Z\to Y$ is a $\Lambda$-equivariant factor map. For $a\in T^{\pm1}$ define
\[
 \widetilde p_a(z)=p_a(\pi z),
\]
viewing $F(T)$ as the subgroup $F(T)\le F(S)$. Then the maps $\widetilde p_a$ satisfy \eqref{eq:generator-compatibility} for the restricted $\Lambda$-action on $Z$, and for every $v\in F(T)$,
\begin{equation}\label{eq:transport}
 \widetilde P_v(z)=P_v(\pi z).
\end{equation}
In particular the measure of $\{\widetilde P_v\ne1\}$ equals the measure of $\{P_v\ne1\}$.
\end{lemma}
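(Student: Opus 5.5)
The plan is to verify the generator compatibility conditions \eqref{eq:generator-compatibility} directly from $\Lambda$-equivariance of $\pi$. After that, \eqref{eq:transport} follows by induction on word length, and the measure statement comes from the fact that $\pi$ pushes $\eta$ forward to $\nu$.

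First, the restricted action. The $\Gamma$-action on $Z$ is pulled back to $F(S)$ through $F(S)\twoheadrightarrow\Gamma$. By the compatibility hypothesis, its restriction to $F(T)$ is the pullback of the $\Lambda$-action on $Z$ through $F(T)\twoheadrightarrow\Lambda$. Equivariance then reads $\pi(g\cdot z)=g\cdot\pi(z)$ for all $g\in F(T)$.

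Next, the two conditions. For $a\in T$ and $z\in Z$, write $g=p_a(\pi z)\in F(T)$. Applying equivariance to $g$ and to $a$, and using \eqref{eq:generator-compatibility} on $Y$, gives
\[
 \pi\bigl(\widetilde p_a(z)\cdot z\bigr)=g\cdot\pi z=a\cdot\pi z=\pi(a\cdot z).
\]
This only shows equality after applying $\pi$, and $\pi$ need not be injective. So the endpoint identity on $Z$ must instead come from the lifting condition. Since $q(p_a(y))=q(a)$ in $\Lambda$, the elements $p_a(\pi z)$ and $a$ have the same image in $\Lambda\le\Gamma$. Because the maps are compatible, they have the same image in $\Gamma$ as well, and therefore act identically on $Z$. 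Hence $\widetilde p_a(z)\cdot z=a\cdot z$. This is the one point needing care: freeness on $Z$ is not assumed, so the identity has to come from the lifting condition rather than from freeness. The inverse condition uses equivariance:
\[
 \widetilde p_{a^{-1}}(a\cdot z)=p_{a^{-1}}(a\cdot\pi z)=p_a(\pi z)^{-1}=\widetilde p_a(z)^{-1}.
\]

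Then \eqref{eq:transport}. Write $v=a_k\cdots a_1$ with $a_i\in T^{\pm1}$, and induct on $k$. The $k$th factor of $\widetilde P_v(z)$ is
\[
 \widetilde p_{a_k}(a_{k-1}\cdots a_1\cdot z)=p_{a_k}(a_{k-1}\cdots a_1\cdot\pi z)
\]
by equivariance, which is exactly the $k$th factor of $P_v(\pi z)$. The cocycle determined by the $\widetilde p_a$ is well defined on $F(T)$, by the uniqueness remark after \eqref{eq:generator-compatibility} applied with $T$ in place of $S$ (freeness of $F(T)$). So the value does not depend on the spelling of $v$.

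Finally, the measure statement. We have $\{\widetilde P_v\ne1\}=\pi^{-1}\{P_v\ne1\}$. A factor map satisfies $\pi_*\eta=\nu$, so the two sets have the same measure. Borelness of the $\widetilde p_a$ is immediate, since they are composites of Borel maps.
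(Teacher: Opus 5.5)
Your proposal is correct and follows essentially the same route as the paper's proof. As in the paper, the endpoint identity on $Z$ comes from the lifting condition on $Y$ together with $\Lambda\le\Gamma$ rather than from applying the non-injective map $\pi$; the inversion identity and \eqref{eq:transport} come from $\Lambda$-equivariance and the word formula; and the measure statement follows from $\pi_*\eta=\nu$.
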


\begin{proof}
The lifting condition on $Y$ says that $p_a(y)$ and $a$ represent the same element of $\Lambda$. Since $\Lambda\le\Gamma$, this gives
\[
 \widetilde p_a(z)\cdot z=a\cdot z.
\]
Using $\Lambda$-equivariance of $\pi$ also gives
\[
 \widetilde p_{a^{-1}}(a\cdot z)
 =p_{a^{-1}}(a\cdot\pi z)
 =p_a(\pi z)^{-1}.
\]
Equation \eqref{eq:transport} follows from the word formula for the cocycle, and the equality of measures follows from $\pi_*\eta=\nu$.
\end{proof}

If $\Lambda\le\Gamma$, the coinduced action of $\Gamma$ associated to a pmp $\Lambda$-action has a $\Lambda$-equivariant factor map onto the original action; see Kechris \cite[Section~II.10.G]{Kechris2010}. Taking a product with a free Bernoulli $\Gamma$-action if necessary makes the action free without changing this factor map.

\begin{proposition}\label{prop:relative-step}
Keep the setting and normalization preceding \cref{prop:pw-terminal}. Let $\Lambda\curvearrowright(Y,\nu)$ be a free pmp action equipped with an $F(T)$-valued cocycle satisfying the lifting condition for $F(T)\twoheadrightarrow\Lambda$, and denote its generator maps by $p_a^Y$. Then for every $\kappa>0$ there exist a free pmp action $\Gamma_m\curvearrowright(\Omega,\lambda)$, a $\Lambda$-equivariant factor map $\pi:\Omega\to Y$, and an $F(T\sqcup\{s\})$-valued cocycle satisfying the lifting condition such that
\[
 p_a=p_a^{\,Y}\circ\pi\qquad(a\in T^{\pm1})
\]
and
\[
 \operatorname{err}_\alpha(v^m)\le m\kappa.
\]
In particular, under the natural inclusion $F(T)\le F(T\sqcup\{s\})$, every cocycle value of a word in $F(T)$ has the same distribution as before the extension.
\end{proposition}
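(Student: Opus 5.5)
Take the free pmp action $\Gamma_m\curvearrowright(X,\mu)$, the set $A$, the operator $\widehat\theta$ and the integer $N$ given by \cref{prop:pw-terminal} for the prescribed $\kappa$. By Howie's injectivity theorem, $\Lambda\le\Gamma_m$. Coinduce the action $\Lambda\curvearrowright(Y,\nu)$ to $\Gamma_m$, which gives a $\Lambda$-equivariant factor map onto $Y$, and set
\[
 \Omega=X\times\operatorname{CoInd}(Y)
\]
with the diagonal $\Gamma_m$-action and product measure $\lambda$. The action is free because the $X$-factor is free. Let $\pi$ be the projection to the coinduced factor followed by its factor map to $Y$; this $\pi$ is still $\Lambda$-equivariant. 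Next, pull back the Poulin--Wr\'obel data along the projection $\Omega\to X$: put $\widetilde A=A\times\operatorname{CoInd}(Y)$, and define the replacement operator on $\Omega$ by the same complementary-path substitution for $v^m$. Because $\Omega\to X$ is $F(T\sqcup\{s\})$-equivariant, paths and their deleted edges in $\Omega$ project to paths and deleted edges in $X$. So $\widehat\theta^N(e_\xi)$ contains no deleted edge for $\xi\in\widetilde A$, the set $\widetilde A$ has measure at least $1/m-\kappa$, and it lies in a Borel transversal for $\langle\bar v\rangle$.

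On $\Omega$, define $p_a=p_a^Y\circ\pi$ for $a\in T^{\pm1}$. By \cref{lem:transport}, these maps satisfy \eqref{eq:generator-compatibility}. Now apply \cref{lem:terminal-repair} with $u=v^m=\omega s$, which acts trivially on $\Omega$. This yields $p_{s^{\pm1}}$ such that $P_{v^m}=1$ on $\widetilde A$ and the $T$-maps are unchanged. The resulting cocycle satisfies the lifting condition, and the distributional claim for words in $F(T)$ follows from \eqref{eq:transport}.

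It remains to estimate the error, and this is where I expect the main obstacle. I would use the $\langle v\rangle$-translates of the zero set. The word $k=v^m$ acts trivially and commutes with $g=v$ in the free group, so by \eqref{eq:zero-set-invariance} the set $Z=\{P_{v^m}=1\}$ is $v$-invariant. Hence
\[
 Z\supseteq \bigcup_{j=0}^{m-1}v^j\widetilde A.
\]
Since $\widetilde A$ lies in a transversal for the order-$m$ group $\langle\bar v\rangle$, these translates are disjoint, and so
\[
 \lambda(Z)\ge m\Bigl(\frac1m-\kappa\Bigr)=1-m\kappa.
\]
This gives $\operatorname{err}_\alpha(v^m)\le m\kappa$. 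The delicate point is checking that the transversal property and the termination bound survive the product with the coinduced factor. This should hold because both are statements pulled back along the equivariant projection $\Omega\to X$, and freeness is only needed for the $\Gamma_m$-action, which it inherits from $X$.
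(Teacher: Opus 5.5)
Your proposal is correct and follows essentially the same route as the paper: coinduce the lower action, take the diagonal product with the Poulin--Wr\'obel action, pull back the lower generator maps and the deleted-edge data along the equivariant projections, apply the terminal repair to $v^m=\omega s$, and amplify using the $v$-invariance of the zero set and the disjointness of the $m$ translates of the transversal piece. The only difference is cosmetic: you obtain freeness directly from the free $X$-factor, whereas the paper first adds a Bernoulli factor to the coinduced action, a step the product with $X_0$ already makes unnecessary.
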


\begin{proof}
By Howie's Freiheitssatz, $\Lambda$ embeds in $\Gamma_m$ \cite{Howie1981}. Coinduce the given $\Lambda$-action to $\Gamma_m$ and, if necessary, take its product with a free Bernoulli $\Gamma_m$-action. This gives a free pmp action $\Gamma_m\curvearrowright(Z,\eta)$ with a $\Lambda$-equivariant factor map $\pi_Z:Z\to Y$. By \cref{lem:transport}, the lower generator maps pull back to $Z$ with all lower-word cocycle distributions unchanged.

Apply \cref{prop:pw-terminal} with the given $\kappa$. It gives independently a free pmp action $\Gamma_m\curvearrowright(X_0,\mu_0)$, a deleted-edge basepoint set $A_0$, a replacement operator $\widehat\theta_0$, and a uniform termination bound $N$, with
\[
 \mu_0(A_0)\ge\frac1m-\kappa
\]
and $A_0$ contained in a transversal for $\langle\bar v\rangle$.

Use the diagonal action on $\Omega=Z\times X_0$ with the product measure $\lambda=\eta\times\mu_0$, and let $\pi=\pi_Z\circ\operatorname{pr}_Z$. Pull the lower generator maps from $Z$ to $\Omega$ and put $A=Z\times A_0$. Lift $\widehat\theta_0$ to $\Omega$ by making exactly the same complementary-word substitution, with deletion determined by the second coordinate. Equivariance of the projection $\Omega\to X_0$ shows that the same bound $N$ terminates every replacement starting from $A$. Since $A_0$ is contained in a transversal for $\langle\bar v\rangle$ and the projection $\Omega\to X_0$ is equivariant, $A$ is contained in a transversal for the induced cyclic action on $\Omega$.

Apply \cref{lem:terminal-repair} to $u=v^m=\omega s$. The repair changes only the maps attached to $s^{\pm1}$, so the displayed formula for the lower maps is preserved, and
\[
 P_{v^m}(x)=1_{F(T\sqcup\{s\})}\qquad(x\in A).
\]
Let
\[
 B_v=\{x:P_{v^m}(x)=1_{F(T\sqcup\{s\})}\}.
\]
By \eqref{eq:zero-set-invariance}, $B_v$ is invariant under $v$. Since the image $\bar v$ has order $m$ and $A$ lies in a transversal for its cyclic action,
\[
 A,\ \bar v\cdot A,\ldots,\bar v^{m-1}\cdot A
\]
are pairwise disjoint subsets of $B_v$. Hence
\[
 \lambda(B_v)\ge m\lambda(A)=m\mu_0(A_0)\ge1-m\kappa,
\]
which gives $\operatorname{err}_\alpha(v^m)\le m\kappa$. The last assertion follows from the factor map and the word formula for the transported cocycle.
\end{proof}
\section{Proof of the main theorem}\label{sec:main-proof}

\begin{proof}[Proof of \cref{thm:main}]
Let
\[
 \Gamma=\langle S\mid r\rangle
\]
with $S$ arbitrary. The word $r$ involves only a finite subset $S_0\subseteq S$, and
\[
 \Gamma\cong \langle S_0\mid r\rangle * F(S\setminus S_0).
\]
Every finite subset of $\Gamma$ is contained in a subgroup
\[
 \langle S_0\mid r\rangle * F(S_1)
\]
for some finite $S_1\subseteq S\setminus S_0$. By locality of soficity and closure under free products \cite{ElekSzabo2006}, it therefore suffices to treat the case in which $S$ is finite. If $r=1$ in $F(S)$, then $\Gamma$ is free and hence sofic. Assume $r\ne1$, replace it by a cyclically reduced conjugate, and write
\[
 r=w^m,
\]
where $m\ge1$ and $w$ is not a proper power.

If $w$ involves only one generator, then after inversion $w=s$ for some $s\in S$, and
\[
 \Gamma\cong F(S\setminus\{s\})*\mathbb Z/m\mathbb Z,
\]
which is sofic by closure under free products \cite{ElekSzabo2006}. Assume henceforth that $w$ involves at least two generators.

Choose a generator $s$ occurring in $w$, put $T=S\setminus\{s\}$ and $\Lambda=F(T)$, and view $w$ in $\Lambda*\langle s\rangle$. The free group $\Lambda$ is locally indicable. Because $w$ is cyclically reduced in the basis $S$, any cyclically reduced conjugate is a cyclic permutation of $w$. Since $w$ contains both an $s^{\pm1}$-letter and a letter from $T^{\pm1}$, it is conjugate neither into $F(T)$ nor into $\langle s\rangle$; and it is not a proper power by construction. Thus the hypotheses of the normalization preceding \cref{prop:pw-terminal} are satisfied. Choose the corresponding $w_0$ and its expansion $v$. Since $\Lambda=F(T)$, the expansion $v$ represents in $F(S)=F(T)*\langle s\rangle$ the same element as $w_0$. As $w_0$ is conjugate to $w$ or $w^{-1}$, the words $v^m$ and $w^m$ have the same normal closure.

Fix $\eps>0$ and choose $\kappa>0$ with $m\kappa<\eps$. Give $\Lambda=F(T)$ any free pmp action and its canonical cocycle $p_a\equiv a$. Applying \cref{prop:relative-step} gives a lifting cocycle for the resulting $\Gamma$-action with
\[
 \operatorname{err}_\alpha(v^m)\le m\kappa<\eps.
\]
Since $\eps$ was arbitrary, \cref{prop:criterion} applied to the presentation $\langle S\mid v^m\rangle$ proves that $\Gamma$ is sofic.
\end{proof}

\section{Iteration over reducible presentations}\label{sec:reducible-proof}

\begin{proposition}\label{prop:induction}
Let $\Gamma=\langle S\mid R\rangle$ be a finite reducible presentation with no proper powers. For every $\eps>0$ there are a free pmp action $\Gamma\curvearrowright(X,\mu)$ and a cocycle satisfying \eqref{eq:lift} such that
\[
 \operatorname{err}_\alpha(r)<\eps\qquad(r\in R).
\]
\end{proposition}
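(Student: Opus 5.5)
We argue by induction on the number of relators $|R|$. For $R=\emptyset$, $\Gamma=F(S)$; we take any free pmp action of $F(S)$, for instance a Bernoulli action, with the canonical cocycle $p_a\equiv a$, and there is nothing to check. For the inductive step, we use the remark at the end of the preliminaries: since the presentation is finite and reducible, we can write it as
\[
 \langle T\sqcup\{s\}\mid R'\sqcup\{u\}\rangle ,
\]
where $R'\subseteq F(T)$, the presentation $\langle T\mid R'\rangle$ is again finite and reducible, and the image of $u$ in $\Lambda*\langle s\rangle$ is not conjugate into $\Lambda$, with $\Lambda=\langle T\mid R'\rangle$. The no-proper-power condition passes to the subpresentation. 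By Howie, $\Lambda$ is locally indicable: inductively it is built by one-relator products without proper powers, starting from free groups. Because $u$ is not a proper power in $\Gamma_u=\Lambda*\langle s\rangle$, we are in the setting preceding \cref{prop:pw-terminal} with $m=1$. We also need $u$ not conjugate into $\langle s\rangle$. If it were, then $u$ would be conjugate to $s^k$, and $\Gamma\cong\Lambda*\mathbb Z/k$ with $k\in\{0,\pm1\}$ by the no-proper-power hypothesis. That case can be handled directly: $u$ is then conjugate in $F(S)$ to $s^{\pm1}$ times an element of $\normal{R'}$. Alternatively, I would add this nondegeneracy to the reduction, since a relator $s^{\pm1}$ simply removes a generator.

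Fix $\eps>0$. By the inductive hypothesis there is a free pmp action $\Lambda\curvearrowright(Y,\nu)$ with an $F(T)$-valued lifting cocycle satisfying $\operatorname{err}(r')<\eps$ for all $r'\in R'$. Apply \cref{prop:relative-step} with $m=1$ and $\kappa<\eps$, with $w=u$ and the normalized expansion $v$. The result is a free pmp action of $\Gamma$ and an $F(S)$-valued lifting cocycle whose lower maps are $p_a^Y\circ\pi$ and with $\operatorname{err}_\alpha(v)\le\kappa<\eps$. By the last assertion of \cref{prop:relative-step}, that is \cref{lem:transport}, $\operatorname{err}_\alpha(r')$ is unchanged and so still less than $\eps$ for each $r'\in R'$.

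It remains to replace $v$ by the actual relator $u$. Here $v$ is obtained from a cyclic conjugate of $u^{\pm1}$ by rewriting each $\Lambda$-syllable with a chosen word over $T$. Hence $u=g\,v^{\pm1}g^{-1}\cdot c$ in $F(S)$, where $c$ is a product of a bounded number $L$ of conjugates of elements of $R'^{\pm1}$; this $L$ depends only on the fixed presentation and the fixed choice of $v$. Using \eqref{eq:error-basic} and \eqref{eq:error-conj}, and noting that all these words act trivially on the $\Gamma$-space, we get $\operatorname{err}_\alpha(u)\le\kappa+L\max_{r'}\operatorname{err}_\alpha(r')$. The inductive application should therefore be run with tolerance $\eps/(2L+2)$ in place of $\eps$ for the lower relators, and with $\kappa<\eps/2$.

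I expect the main obstacle to be this bookkeeping between the relator $u$ as a word in $F(S)$ and the normalized expansion $v$ produced through the free-product normal form. One has to confirm that $v$ depends only on the presentation, so that $L$ is independent of $\eps$ and the tolerances can be fixed in advance. One also has to confirm that the normalization hypotheses of \cref{prop:pw-terminal} hold, namely that the lower group is finitely generated and locally indicable and that $u$ is conjugate into neither factor. The remaining steps are direct applications of the relative repair.
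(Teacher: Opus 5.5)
Your proposal is correct and follows the paper's proof essentially step for step: induction on $|R|$, the relative one-step repair with $m=1$ applied to the normalized expansion $v$, and a fixed expression of the lower-kernel word $r c^{-1}v^{-\zeta}c\in\normal{R'}$ as $M$ conjugates of lower relators, with the tolerances $\xi<\eps/(2(M+1))$ and $\kappa<\eps/2$ chosen only after $v$ and $M$ are fixed. The one point you leave as a sketch is the case where $u$ is conjugate to $s^{\pm1}$. The paper handles it in your ``direct'' way: view the lower free action as a free action of $\Gamma\cong\Lambda$, set $p_s=p_{s^{-1}}\equiv 1_{F(S)}$, and bound $\operatorname{err}_\alpha(u)$ by $M_0$ lower errors. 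Your alternative of building the nondegeneracy into the reduction would not by itself control the error of that particular relator.
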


\begin{proof}
We induct on $|R|$. If $R=\varnothing$, take any free pmp action and the canonical cocycle $P_v(x)=v$.

Assume $R\ne\varnothing$. Reducibility gives $s\in S$ and $r\in R$ such that, with
\[
 S'=S\setminus\{s\},\qquad R'=R\setminus\{r\},\qquad
 \Lambda=\langle S'\mid R'\rangle,
\]
the lower presentation is reducible and the image of $r$ in $\Lambda*\langle s\rangle$ is not conjugate into $\Lambda$. The no-proper-power condition passes to the lower presentation and says that the image of $r$ is not a proper power in $\Lambda*\langle s\rangle$. The lower group $\Lambda$ is locally indicable by Howie's theorem \cite[Corollary~4.5]{Howie1982}.

Let
\[
 \pi_0:F(S)\longrightarrow\Lambda*\langle s\rangle
\]
be the quotient obtained by imposing the lower relations $R'$.

If $\pi_0(r)$ is conjugate into $\langle s\rangle$, then, since it is not a proper power, it is conjugate to $s$ or $s^{-1}$. The final group is isomorphic to $\Lambda$ and $s$ acts trivially. Choose $g\in\Lambda*\langle s\rangle$ and $\zeta\in\{\pm1\}$ with
\[
 s=g\pi_0(r)^\zeta g^{-1},
\]
and choose $c\in F(S)$ with $\pi_0(c)=g$. Then
\[
 k_0=r c^{-1}s^{-\zeta}c\in\normal{R'}.
\]
Fix an expression
\[
 k_0=\prod_{j=1}^{M_0}h_j(r_j')^{\delta_j}h_j^{-1},
 \qquad r_j'\in R',\quad \delta_j\in\{\pm1\}.
\]
Apply the induction hypothesis to the lower presentation with tolerance $\xi>0$ such that $(M_0+1)\xi<\eps$. Regard the resulting action as an action of $\Gamma\cong\Lambda$, embed the lower cocycle values in $F(S)$, and put
\[
 p_s(x)=p_{s^{-1}}(x)=1_{F(S)}.
\]
The lower relator errors are unchanged and $\operatorname{err}_\alpha(s)=0$. By \eqref{eq:error-basic} and \eqref{eq:error-conj},
\[
 \operatorname{err}_\alpha(r)
 \le \sum_{j=1}^{M_0}\operatorname{err}_\alpha(r_j')
 \le M_0\xi<\eps.
\]
This proves the induction step in this case.

Assume now that $\pi_0(r)$ is conjugate into neither free-product factor. Use the normalization preceding \cref{prop:pw-terminal}, with $m=1$, for the conjugacy class of $\pi_0(r)$ or its inverse. Let $v\in F(S)$ be the resulting finite-alphabet word, so that
\[
 v=\omega s.
\]
There are $g\in\Lambda*\langle s\rangle$ and $\zeta\in\{\pm1\}$ such that
\begin{equation}\label{eq:normalized-top-element}
 \pi_0(v)=g\pi_0(r)^\zeta g^{-1}.
\end{equation}
Choose $c\in F(S)$ with $\pi_0(c)=g$ and put
\begin{equation}\label{eq:lower-kernel-word}
 k=r c^{-1}v^{-\zeta}c.
\end{equation}
Then $k\in\ker\pi_0=\normal{R'}$. Fix an expression
\begin{equation}\label{eq:lower-kernel-expression}
 k=\prod_{j=1}^{M}h_j(r_j')^{\delta_j}h_j^{-1},
 \qquad r_j'\in R',\quad \delta_j\in\{\pm1\}.
\end{equation}

Fix $\eps>0$ and choose
\[
 0<\xi<\frac{\eps}{2(M+1)}.
\]
By induction, there are a free pmp action $\Lambda\curvearrowright(Y,\nu)$ and a lower cocycle with
\begin{equation}\label{eq:lower-errors}
 \operatorname{err}_\alpha(r')<\xi\qquad(r'\in R').
\end{equation}
Choose $\kappa>0$ with $\kappa<\eps/2$. Apply \cref{prop:relative-step} to the lower action and cocycle, with $m=1$. The resulting cocycle on a free pmp $\Gamma$-action preserves all estimates \eqref{eq:lower-errors} and satisfies
\begin{equation}\label{eq:v-error}
 \operatorname{err}_\alpha(v)\le\kappa<\frac{\eps}{2}.
\end{equation}

From \eqref{eq:lower-kernel-word},
\[
 r=k c^{-1}v^\zeta c.
\]
The words $k$ and $v$ act trivially in the final group. Thus \eqref{eq:error-basic}, \eqref{eq:error-conj}, \eqref{eq:lower-kernel-expression}, and \eqref{eq:v-error} give
\[
 \operatorname{err}_\alpha(r)
 \le \operatorname{err}_\alpha(k)+\operatorname{err}_\alpha(v)
 \le \sum_{j=1}^{M}\operatorname{err}_\alpha(r_j')+\operatorname{err}_\alpha(v)
 <M\xi+\frac{\eps}{2}<\eps.
\]
This completes the induction.
\end{proof}

For a finite reducible presentation, \cref{prop:induction} and \cref{prop:criterion} prove soficity.

\subsection*{A final power relation}

\begin{proof}[Proof of \cref{thm:final-power}]
Put
\[
 \Gamma=(\Lambda*\langle s\rangle)/\normal{u^m}.
\]
The group $\Lambda$ is locally indicable by Howie's theorem \cite[Corollary~4.5]{Howie1982}. If the image of $u$ is conjugate into $\langle s\rangle$, then the non-proper-power assumption makes it conjugate to $s$ or $s^{-1}$, and
\[
 \Gamma\cong\Lambda*\mathbb Z/m\mathbb Z,
\]
where $\mathbb Z/1\mathbb Z$ is trivial. This group is sofic because $\Lambda$ is sofic by the finite reducible case and soficity is preserved under free products \cite{ElekSzabo2006}.

Assume that $u$ is conjugate into neither free-product factor. Let
\[
 \pi_0:F(S'\sqcup\{s\})\longrightarrow\Lambda*\langle s\rangle
\]
be the quotient by the lower relations. Use the normalization preceding \cref{prop:pw-terminal} for the conjugacy class of the image of $u$ or its inverse. Let $v\in F(S'\sqcup\{s\})$ be the resulting finite-alphabet word, so that
\[
 v^m=\omega s.
\]
There are $g\in\Lambda*\langle s\rangle$ and $\zeta\in\{\pm1\}$ with
\[
 \pi_0(v)=g\pi_0(u)^\zeta g^{-1}.
\]
Choose $c\in F(S'\sqcup\{s\})$ with $\pi_0(c)=g$. Then
\[
 k=u^m c^{-1}(v^m)^{-\zeta}c\in\normal{R'}.
\]
Fix an expression of $k$ as a product of $M$ conjugates of elements of $R'^{\pm1}$.

Fix $\eps>0$. By \cref{prop:induction}, choose a lower cocycle with
\[
 \operatorname{err}_\alpha(r')<\xi\qquad(r'\in R')
\]
for $\xi>0$ so small that $M\xi<\eps/2$. Choose $\kappa>0$ with $m\kappa<\eps/2$. Apply \cref{prop:relative-step} to the lower action and cocycle. The resulting cocycle on a free pmp $\Gamma$-action preserves all lower relator errors and satisfies
\[
 \operatorname{err}_\alpha(v^m)\le m\kappa<\frac\eps2.
\]
Since
\[
 u^m=k c^{-1}(v^m)^\zeta c,
\]
the error inequalities give
\[
 \operatorname{err}_\alpha(u^m)
 \le M\xi+\operatorname{err}_\alpha(v^m)<\eps.
\]
The cocycle criterion proves that $\Gamma$ is sofic.
\end{proof}

\section{Arbitrary reducible presentations}\label{sec:consequences}

\begin{lemma}\label{lem:finite-subpresentation-test}
Let $\Gamma=\langle S\mid R\rangle$ be a presentation, with $S$ and $R$ arbitrary, and let $W\subset F(S)$ be finite. Then there is a finite subpresentation
\[
 \Gamma_0=\langle S_0\mid R_0\rangle
\]
with $S_0\subset S$ and $R_0\subset R$ such that every word of $W$ is a word over $S_0$ and
\[
 u=1\text{ in }\Gamma_0\quad\Longleftrightarrow\quad u=1\text{ in }\Gamma
 \qquad(u\in W).
\]
\end{lemma}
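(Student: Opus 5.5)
The plan is a compactness argument on finite relator expressions, done one word at a time and then merged. Since $W$ is finite, it suffices to build, for each $u\in W$, a finite set of generators and a finite set of relators that certify the correct behaviour of $u$, and then take unions. Enlarging a finite subpresentation preserves the property ``$u=1$ in $\Gamma_0$'', and ``$u\ne1$ in $\Gamma_0$'' is automatic once $\Gamma_0$ maps onto the relevant part of $\Gamma$.

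First, let $S_0'$ be the finite set of generators occurring in the words of $W$. For each $u\in W$ with $u=1$ in $\Gamma$, we have $u\in\normal{R}$ in $F(S)$. Fix an expression
\[
 u=\prod_{j=1}^{\ell(u)} c_j r_j^{\epsilon_j} c_j^{-1},
 \qquad c_j\in F(S),\quad r_j\in R,\quad \epsilon_j\in\{\pm1\}.
\]
This expression involves finitely many relators $r_j$ and finitely many letters, namely those appearing in the $c_j$ and the $r_j$. Let $R_0$ be the union over all such $u$ of the relators used. Let $S_0$ be $S_0'$ together with every generator occurring in some $c_j$ or in some $r\in R_0$. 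Both sets are finite, and every $r\in R_0$ is a word over $S_0$, so $\langle S_0\mid R_0\rangle$ is a finite subpresentation.

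Next, verify the equivalence. If $u=1$ in $\Gamma$, the displayed identity holds in $F(S)$. All its letters lie in $S_0$, and $F(S_0)\le F(S)$ is a free factor, so the identity already holds in $F(S_0)$. Hence $u\in\normal{R_0}_{F(S_0)}$, that is, $u=1$ in $\Gamma_0$. Conversely, the inclusion $S_0\subseteq S$ induces a homomorphism $F(S_0)\to F(S)\to\Gamma$. This map sends each $r\in R_0\subseteq R$ to $1$, so it factors through a homomorphism $\Gamma_0\to\Gamma$ compatible with words. Therefore $u=1$ in $\Gamma_0$ implies $u=1$ in $\Gamma$.

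I expect no serious obstacle. The only point needing care is the remark that an identity of reduced words in $F(S)$ whose letters all lie in $S_0$ is an identity in $F(S_0)$. This holds because $F(S_0)$ embeds in $F(S)$, with a retraction killing $S\setminus S_0$.
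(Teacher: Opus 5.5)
Your proposal is correct and is essentially the paper's proof. Like the paper, you collect the generators of $W$, then add the relators and conjugating letters from a fixed normal-closure expression for each trivial word, and use the induced homomorphism $\Gamma_0\to\Gamma$ for the converse; your remark that $F(S_0)\to F(S)$ is injective (via the retraction) just makes explicit a step the paper leaves implicit.
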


\begin{proof}
Put into $S_0$ every generator occurring in a word of $W$. For each $u\in W$ that is trivial in $\Gamma$, choose a finite expression
\[
 u=\prod_{j=1}^{m(u)}h_{u,j}r_{u,j}^{\epsilon_{u,j}}h_{u,j}^{-1},
 \qquad r_{u,j}\in R,\quad \epsilon_{u,j}\in\{\pm1\}.
\]
Add the finitely many relators $r_{u,j}$ to $R_0$ and enlarge $S_0$ to contain every generator occurring in them and in the conjugating words. Then every word of $W$ that is trivial in $\Gamma$ is already trivial in $\Gamma_0$. Conversely, the inclusion of generators and relators gives a homomorphism $\Gamma_0\to\Gamma$, so a word trivial in $\Gamma_0$ is trivial in $\Gamma$.
\end{proof}

\begin{proof}[Proof of \cref{thm:reducible}]
The finite case follows from \cref{prop:induction} and \cref{prop:criterion}. Let $\Gamma=\langle S\mid R\rangle$ have a reducible presentation without proper powers, with $S$ and $R$ arbitrary. Fix a finite $K\subset\Gamma$ and $\delta>0$, enlarge $K$ to contain $1$, and choose words $\widehat g\in F(S)$ representing every element of $K\cup K^2$, with $\widehat1=1$. Let $W$ consist of
\[
 \widehat g\widehat h\widehat{gh}^{-1}\qquad(g,h\in K)
\]
and
\[
 \widehat g\widehat h^{-1}\qquad(g,h\in K,\ g\ne h).
\]
Apply \cref{lem:finite-subpresentation-test}. In the resulting finite subpresentation $\Gamma_0$, the first family is trivial and the second family is nontrivial. Thus the elements represented by the $\widehat g$ preserve the products from $K$ exactly and are distinct. The finite subpresentation is reducible and has no proper powers, so it is sofic by the finite case. A sufficiently good finite sofic approximation of these finitely many elements gives the approximation required in the definition for $K$ and $\delta$.
\end{proof}

\begin{corollary}\label{cor:arbitrary-final-power}
Let $\Lambda=\langle S'\mid R'\rangle$ be given by a reducible presentation without proper powers. Let $s\notin S'$ and let $u\in F(S'\sqcup\{s\})$ have image in $\Lambda*\langle s\rangle$ which is not conjugate into $\Lambda$ and is not a proper power. Then, for every $m\ge1$,
\[
 (\Lambda*\langle s\rangle)/\normal{u^m}
\]
is sofic.
\end{corollary}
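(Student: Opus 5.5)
The plan is to reduce \cref{cor:arbitrary-final-power} to the finite case, \cref{thm:final-power}, using \cref{lem:finite-subpresentation-test} in the same way as the proof of \cref{thm:reducible}. Put $\Gamma=(\Lambda*\langle s\rangle)/\normal{u^m}$ and regard it as presented by $\langle S'\sqcup\{s\}\mid R'\cup\{u^m\}\rangle$.

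First fix a finite $K\subset\Gamma$ and $\delta>0$. Choose representing words $\widehat g\in F(S'\sqcup\{s\})$ for $K\cup K^2$, and let $W$ be the finite test set of product words $\widehat g\widehat h\widehat{gh}^{-1}$ and difference words $\widehat g\widehat h^{-1}$ ($g\ne h$). Apply \cref{lem:finite-subpresentation-test} to this presentation of $\Gamma$. Then enlarge the finite pieces as follows: put $s$ and every generator occurring in $u$ into $S_0$, and put $u^m$ into $R_0$. Enlarging only preserves the conclusion of the lemma, because triviality in $\Gamma_0$ still implies triviality in $\Gamma$. Write $S_0'=S_0\setminus\{s\}$ and $R_0'=R_0\setminus\{u^m\}$. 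Then $R_0'\subset R'$ consists of words over $S_0'$ after a further enlargement of $S_0'$ by the generators appearing in $R_0'$. Set $\Lambda_0=\langle S_0'\mid R_0'\rangle$, so that $\Gamma_0=(\Lambda_0*\langle s\rangle)/\normal{u^m}$.

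The main obstacle is checking the hypotheses of \cref{thm:final-power} for $\Lambda_0$ and $u$. The presentation of $\Lambda_0$ is a finite subpresentation of a reducible presentation without proper powers, so it is again reducible without proper powers (Section~4 of \cite{PoulinWrobel2026}, as recalled in the preliminaries). What remains is that the image of $u$ in $\Lambda_0*\langle s\rangle$ is neither conjugate into $\Lambda_0$ nor a proper power. The route is through the natural surjection $\Lambda_0*\langle s\rangle\to\Lambda*\langle s\rangle$ induced by $\Lambda_0\to\Lambda$. If $u$ were conjugate into $\Lambda_0$, applying this map would make it conjugate into $\Lambda$, contradicting the hypothesis. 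If $u=z^k$ with $k\ge2$ in $\Lambda_0*\langle s\rangle$, then its image equals $\bar z^k$. Here care is needed: the image is a proper power unless $\bar z$ becomes trivial, and $\bar z=1$ is impossible because then $u$ would map to $1\in\Lambda$, which is conjugate into $\Lambda$. So $u$ is not a proper power in $\Lambda_0*\langle s\rangle$, and \cref{thm:final-power} shows that $\Gamma_0$ is sofic.

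Finally, the words in $W$ are trivial or nontrivial in $\Gamma_0$ exactly as in $\Gamma$. Hence the elements of $\Gamma_0$ represented by the $\widehat g$ multiply exactly as $K$ does and are pairwise distinct. Take a sofic approximation of $\Gamma_0$ on these finitely many elements with tolerance $\delta$, and transfer it to $K$ via $g\mapsto\widehat g$. This gives the required map $\phi:K\to\Sym(d)$, so $\Gamma$ is sofic.
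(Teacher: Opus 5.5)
Your proposal is correct and follows essentially the same route as the paper. Both arguments apply \cref{lem:finite-subpresentation-test}, enlarge the finite subpresentation to contain $u^m$ and the generators of $u$, push the non-conjugacy and non-proper-power conditions forward along $\Lambda_0*\langle s\rangle\to\Lambda*\langle s\rangle$, and invoke \cref{thm:final-power}; your explicit handling of the case $\bar z=1$ in the proper-power step is a small detail the paper leaves implicit.
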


\begin{proof}
Fix a finite soficity test in the displayed group and apply \cref{lem:finite-subpresentation-test}. Enlarge the resulting finite subpresentation to contain the relation $u^m$ and all generators occurring in $u$. The truth values of the finitely many test words are unchanged by this enlargement: words already trivial remain trivial, while a word nontrivial in the original group cannot become trivial in the enlarged subpresentation because the latter still maps to the original group. Its lower part is a finite subpresentation of the reducible presentation of $\Lambda$, hence is reducible and has no proper powers. If the image of $u$ in the corresponding finite lower free product were conjugate into the lower group, or were a proper power, its image in $\Lambda*\langle s\rangle$ would have the same property. Thus \cref{thm:final-power} applies to the finite subpresentation. A sufficiently good approximation of the chosen finite test transfers to the original group.
\end{proof}

\subsection*{Acknowledgements}
I thank Antoine Poulin and Konrad Wr\'obel for helpful comments on earlier versions, particularly Poulin's observation concerning the cocycle interpretation and Wr\'obel's suggestion to consult the work of Elek and Lippner.

\subsection*{Tool and AI use disclosure}
AI tools, including large language models, were used during exploratory work and manuscript preparation for English drafting and translation, literature and citation checks, and searches for errors or gaps in candidate arguments (including checks for possible gaps in the use of the Elek--Lippner results in \cref{lem:trace-realization}). The author checked the mathematical arguments, citations, and attribution and takes responsibility for the content of the paper.

\end{document}